\theoremstyle{plain}
\newtheorem{lemma}{Lemma}[section]
\newtheorem{proposition}[lemma]{Proposition}
\newtheorem{theorem}[lemma]{Theorem}
\newtheorem{assumption}[lemma]{Assumption}
\theoremstyle{definition}
\newtheorem{definition}[lemma]{Definition}
\newtheorem{remark}[lemma]{Remark}
\newlist{todolist}{itemize}{2}
\setlist[todolist]{label=$\square$}
\begin{document}
	
	\title{Semilinear Feynman--Kac Formulae for $B$-Continuous Viscosity Solutions}
	\newcommand\shorttitle{Semilinear Feynman--Kac Formulae for $B$-Continuous Viscosity Solutions}
	\date{November 21, 2024}
	
	\author{Lukas Wessels}
	\newcommand\authors{}
	
	\affil{\small School of Mathematics, Georgia Institute of Technology}

\maketitle

\unmarkedfntext{\textit{Mathematics Subject Classification (2020) ---} 60H30, 35R15, 60H15}


\unmarkedfntext{\textit{Keywords and phrases ---} backward stochastic differential equations, partial differential equations in infinite dimensional spaces, $B$-contnuous viscosity solutions}

\unmarkedfntext{\textit{Email}: \textbullet$\,$ wessels@gatech.edu}

\begin{abstract}
	We prove the existence of a $B$-continuous viscosity solution for a class of infinite dimensional semilinear partial differential equations (PDEs) using probabilistic methods. Our approach also yields a stochastic representation formula for the solution in terms of a scalar-valued backward stochastic differential equation. The uniqueness is proved under additional assumptions using a comparison theorem for viscosity solutions. Our results constitute the first nonlinear Feynman--Kac formula using the notion of $B$-continuous viscosity solutions and thus introduces a framework allowing for generalizations to the case of fully nonlinear PDEs.
\end{abstract}


\section{Introduction}

The classical Feynman--Kac formula going back to Richard Feynman \cite{feynman1948} and Mark Kac \cite{kac1949} gives a stochastic representation for the solution of a linear partial differential equation (PDE) in terms of a path integral over a diffusion process. This result and generalizations thereof have since found many applications in various fields such as quantitative finance and stochastic optimal control, see e.g. \cite{pham2009}. Recent years have shown a rising interest in control problems associated with stochastic partial differential equations (SPDEs), see e.g. \cite{fabbri2017}. Therefore, generalizations of the Feynman--Kac formula to infinite dimensions are of increasing importance. Another emerging field that relies on nonlinear generalizations of the Feynman--Kac formula is numerical analysis for high-dimensional PDEs. The stochastic representation for solutions of PDEs can be utilized to develop numerical schemes based on machine learning methods, see e.g. \cite{han2018,beck2019,hure2020}. In this paper, we prove a generalization of the classical Feynman--Kac formula to semilinear PDEs in infinite dimensional spaces using the theory of $B$-continuous viscosity solutions and backward stochastic differential equations (BSDEs).

For finite initial and terminal times $0\leq t< T<\infty$, respectively, and a separable Hilbert space $H$, we consider the SPDE
\begin{equation}\label{forwardequation}
	\begin{cases}
		\mathrm{d}X^{t,x}_s = [ A X^{t,x}_s + b(s,X^{t,x}_s) ] \mathrm{d}s + \sigma(s,X^{t,x}_s) \mathrm{d}W_s,\quad s\in [t,T]\\
		X^{t,x}_t =x\in H,
	\end{cases}
\end{equation}
where $A:\mathcal{D}(A)\subset H\to H$ is an unbounded linear operator, and $b:[0,T]\times H \to H$ and $\sigma:[0,T]\times H \to L_2(\Xi,H)$ are the drift and noise coefficient, respectively. Moreover, $(W_s)_{s\in [t,T]}$ is a cylindrical Wiener process on some separable Hilbert space $\Xi$ and defined on some probability space $(\Omega,\mathcal{F},(\mathcal{F}_s)_{s\in [t,T]}, \mathbb{P})$ with $(\mathcal{F}_s)$ being its natural filtration augmented by all $\mathbb{P}$-null sets. Furthermore, we consider the infinite dimensional PDE
\begin{equation}\label{PDE}
	\begin{cases}
		v_t(t,x) + \langle Ax + b(t,x),Dv(t,x) \rangle_H \\
		\quad + \frac12 \text{tr}(\sigma^{\ast}(t,x) D^2v(t,x) \sigma(t,x)) - f(t,x,v(t,x),\sigma^{\ast}Dv(t,x)) = 0, \quad (t,x)\in [0,T]\times H\\
		v(T,x) = g(x),\quad x\in H,
	\end{cases}
\end{equation}
where $f:[0,T]\times H \times \mathbb{R}\times \Xi\to \mathbb{R}$ and $g : H\to \mathbb{R}$ are given. In order to introduce the main ideas, let us assume that equation \eqref{PDE} admits a smooth solution $v$ with $Dv(t,x)\in \mathcal{D}(A^{\ast})$, where $A^{\ast}$ denotes the adjoint of $A$. Then, applying It\^o's formula and plugging in equation \eqref{PDE} immediately yields that
\begin{equation}\label{pair}
	\begin{cases}
		Y^{t,x}_s = v(s,X^{t,x}_s)\\
		Z^{t,x}_s = \sigma^{\ast}Dv(s,X^{t,x}_s)
	\end{cases}
\end{equation}
solves the BSDE
\begin{equation}\label{bsde}
	\begin{cases}
		\mathrm{d}Y^{t,x}_s = f(s,X^{t,x}_s,Y^{t,x}_s,Z^{t,x}_s) \mathrm{d}s + \langle Z^{t,x}_s ,\mathrm{d}W_s\rangle_{\Xi},\quad s\in [t,T]\\
		Y^{t,x}_T = g(X^{t,x}_T).
	\end{cases}
\end{equation}
In particular, for $s=t$, equations \eqref{pair} and \eqref{bsde} yield the following stochastic representation of $v$:
\begin{equation}\label{stochasticrepresentation2}
	\begin{split}
	v(t,x)& = Y^{t,x}_t\\
	&= g(X^{t,x}_T) - \int_t^T f(s,X^{t,x}_s,v(s,X^{t,x}_s),\sigma^{\ast}Dv(s,X^{t,x}_s)) \mathrm{d}s\\
	&\quad  - \int_t^T \langle \sigma^{\ast}Dv(s,X^{t,x}_s), \mathrm{d}W_s\rangle_{\Xi} .
	\end{split}
\end{equation}
The preceding discussion is only formal since the PDE \eqref{PDE} in general, does not admit a smooth solution and therefore, the solution of the BSDE \eqref{bsde} can not be given in terms of the derivative of $v$ as in \eqref{pair}. However, the stochastic representation \eqref{stochasticrepresentation2} remains valid even if the solution of equation \eqref{PDE} is not differentiable. The objective of this paper is to prove that the stochastic representation
\begin{equation}\label{stochasticrepresentation}
	u(t,x):= Y^{t,x}_t
\end{equation}
yields a $B$-continuous viscosity solution for the PDE \eqref{PDE}, see Theorem \ref{mainresult}. Furthermore, we prove under additional assumptions that $u$ as given in \eqref{stochasticrepresentation} is the unique solution of the PDE \eqref{PDE}.

\begin{remark}
	The representation \eqref{stochasticrepresentation} can be viewed as a nonlinear generalization of the classical Feynman--Kac formula: If $f(s,x,y,z) = k(s,x)y + l(s,x)$ for some functions $k,l: [t,T]\times H \to \mathbb{R}$, then the BSDE \eqref{bsde} admits an explicit solution formula for $Y^{t,x}$ in terms of $X^{t,x}$ and $Z^{t,x}$. The $Z^{t,x}$-dependency drops out when taking the expectation in \eqref{stochasticrepresentation2}, yielding the classical Feynman--Kac formula, see e.g. \cite[Chapter 5, Theorem 7.6]{karatzas1991}.
\end{remark}

In the finite dimensional case, nonlinear Feynman--Kac formulae using BSDEs go back to Pardoux and Peng \cite{pardoux1992}. Following their seminal work, there were many generalizations of this relationship between PDEs and BSDEs in various directions such as relaxing the assumptions on the coefficients \cite{bally2005,kobylanski2000}, and deriving a similar relationship for integral-partial differential equations and BSDEs involving a forward jump-diffusion process \cite{barles1997}, for nonlocal PDEs and mean-field BSDEs \cite{buckdahn2009}, for quasilinear PDEs and forward-backward SDEs \cite{feng2018,pardoux1999,peng1999}, for reflected BSDEs and related obstacle problems for PDEs \cite{elkaroui1997} and, more recently, even for fully nonlinear PDEs and second order BSDEs \cite{cheridito2007,soner2012,soner2013}.

In the infinite dimensional case, the first nonlinear Feynman--Kac formula using BSDEs was obtained by Fuhrman and Tessitore in \cite{fuhrman2002}, again followed by generalizations relaxing the assumptions on the coefficients \cite{briand2008,fuhrman2005,masiero2008,masiero2014}, and deriving similar relationships for elliptic PDEs and infinite horizon BSDEs \cite{fuhrman2004,hu2015,guatteri2020}, for PDEs on a space of continuous functions and stochastic delay differential equations \cite{fuhrman2010,masiero2021}, for integro-differential equations and BSDEs involving jump-diffusions \cite{bandini2019}, and for quasilinear PDEs and forward-backward SDEs \cite{cerrai2022}.

Except for \cite{bandini2019}, all of the results in the infinite dimensional case study the PDE \eqref{PDE} in the framework of mild solutions. In \cite{bandini2019}, the authors obtain stochastic representations for viscosity solutions of PDEs in infinite dimensional spaces similar to, and in some ways more general than, the one described in \eqref{stochasticrepresentation}. However, their proofs make use of the underlying control problem, which imposes a certain structure on the nonlinearity $f$ in \eqref{PDE}. In this work, our objective is to prove such a relationship between BSDEs and PDEs in infinite dimensional spaces using the framework of $B$-continuous viscosity solutions without relying on stochastic control methods. As a byproduct, our probabilistic approach yields a new method to prove existence of $B$-continuous viscosity solutions for infinite dimensional PDEs involving unbounded terms. There are various methods in the existing literature for proving the existence of solutions, see \cite[Chapter 3]{fabbri2017} and the references therein. These existing approaches construct solutions even for fully nonlinear PDEs exploiting the connection with infinite dimensional stochastic control problems, using finite dimensional approximations, or by extending Perron's method to the infinite dimensional case. However, in our setting, we do not impose any structural assumptions on the nonlinearity $f$ and therefore, in general one cannot introduce an associated stochastic control problem to construct a solution to the PDE \eqref{PDE}. Furthermore, the method using finite dimensional approximations imposes the additional assumption that the unbounded operator $A$ satisfies the strong $B$-condition for a compact operator $B$ (see Assumption \ref{assumptionBcondition} for a definition of the strong $B$-condition). Finally, Perron's method requires the additional coercivity assumption $-\langle A^{\ast}x,x \rangle_H \geq C \|B^{-1/2}x\|_H^2$ for some constant $C>0$ and all $x\in \mathcal{D}(A^{\ast})$, see \cite[Section 3.9]{fabbri2017}.

Let us also mention that our work introduces a framework for the extension of the classical Feynman--Kac formula to the case of infinite dimensional fully nonlinear PDEs using second order BSDEs in the spirit of the corresponding results in finite dimensions, see \cite{cheridito2007,soner2012,soner2013}. This problem will be investigated in future work.

The remainder of the paper is organized as follows: In Section \ref{assumptions}, we introduce the notation and the assumptions we are working with in the following sections. In Section \ref{BSDEs}, we prove some results for scalar-valued BSDEs driven by an infinite dimensional Wiener process. Section \ref{mainresult} contains our main result, the existence of a $B$-continuous viscosity solution of equation \eqref{PDE} and its stochastic representation \eqref{stochasticrepresentation2}. Furthermore, the uniqueness is also discussed in Section \ref{mainresult}.

\section{Assumptions}\label{assumptions}

We impose the following assumptions on the unbounded operator $A$.

\begin{assumption}\label{assumptionA}
	\begin{enumerate}[label=(A\arabic*)]
		\item\label{assumptionAsemigroup} Let $A:\mathcal{D}(A) \subset H \to H$ be a linear, densely defined, maximal dissipative operator.
		\item\label{assumptionBcondition} Let $B\in L(H)$ be a strictly positive, self-adjoint operator that satisfies the strong $B$-condition for $A$, i.e., $A^{\ast} B \in L(H)$ and
		\begin{equation}
				-A^{\ast} B + c_0 B\geq I
		\end{equation}
		for some $c_0 \geq 0$.
	\end{enumerate}
\end{assumption}

Under Assumption \ref{assumptionAsemigroup}, the operator $A$ is the generator of a $C_0$-semigroup which we denote in the following by $(S(s))_{s\in [t,T]}$. Furthermore, using the operator $B$, we define the space $H_{-1}$ as the completion of the space $H$ with respect to the norm
\begin{equation}
	\|x\|^2_{H_{-1}} := \langle Bx,x\rangle_H.
\end{equation}

We impose the following assumptions on $b$ and $\sigma$.
\begin{assumption}\label{assumptionB}
	\begin{enumerate}[label=(B\arabic*)]
		\item\label{assumptionb} Let $b:[0,T]\times H\to H$ be $\mathcal{B}([0,T]) \otimes \mathcal{B}(H)/\mathcal{B}(H)$-measurable and let there be a constant $C>0$ such that
		\begin{align}
			\| b(s,x) - b(s,x^{\prime}) \|_H &\leq C \|x-x^{\prime}\|_H\\
			\| b(s,x) \|_H & \leq C (1 + \|x\|_H )
		\end{align}
		for all $s\in [0,T]$, $x,x^{\prime}\in H$.
		\item\label{assumptionsigma} Let $\sigma:[0,T]\times H \to L_2(\Xi;H)$ be $\mathcal{B}([0,T])\otimes \mathcal{B}(H)/\mathcal{B}(L_2(\Xi,H))$-measurable and let there be a constant $C>0$ such that
		\begin{align}
			\| \sigma(s,x) - \sigma(s,x^{\prime}) \|_{L_2(\Xi,H)} &\leq C \|x-x^{\prime}\|_{H_{-1}}\\
			\| \sigma(s,x) \|_{L_2(\Xi,H)} & \leq C (1 + \|x\|_H ),
		\end{align}
		for all $s\in [0,T]$, $x,x^{\prime}\in H$.
	\end{enumerate}
\end{assumption}

\begin{remark}
	While the Lipschitz condition on $b$, and the linear growth conditions on $b$ and $\sigma$ are standard assumptions in the theory of SPDEs, the Lipschitz condition on $\sigma$ with respect to the $H_{-1}$-norm imposes additional restrictions. Note, however, that this is a standard assumption in the theory of $B$-continuous viscosity solutions, see \cite[Chapter 3]{fabbri2017}. For examples in which this condition is satisfied, see \cite[Remark 3.21]{fabbri2017}. 
\end{remark}

Finally, we impose the following assumptions on the coefficients of the BSDE.
\begin{assumption}\label{assumptionC}
	\begin{enumerate}[label=(C\arabic*)]
		\item\label{assumptionf} Let $f:[0,T]\times H \times \mathbb{R}\times \Xi\to \mathbb{R}$ be uniformly continuous on bounded subsets of $(0,T)\times H\times \mathbb{R}\times \Xi$ and satisfy the following conditions:
		\begin{itemize}
			\item There exists a constant $C>0$ such that
			\begin{equation}
				|f(s,x,y,z) - f(s,x^{\prime},y^{\prime},z^{\prime})| \leq C ( \|x-x^{\prime}\|_H+ |y-y^{\prime}|+ \|z-z^{\prime}\|_{\Xi})
			\end{equation}
			for all $s\in [0,T]$, $x,x^{\prime}\in H$, $y,y^{\prime}\in \mathbb{R}$, $z,z^{\prime}\in \Xi$.
			\item It holds
			\begin{equation}
				\int_0^T |f(s,0,0,0)|^2 \mathrm{d}s < \infty.
			\end{equation}
		\end{itemize}
		\item\label{assumptiong} For $g:H\to\mathbb{R}$, let there be a constant $C>0$ such that
		\begin{equation}
			|g(x) - g(x^{\prime})| \leq C \|x-x^{\prime}\|_{H}
		\end{equation}
		for all $x,x^{\prime}\in H$.
	\end{enumerate}
\end{assumption}

\begin{remark}
	Note that the SPDE \eqref{forwardequation} as well as the BSDE \eqref{bsde} are well-posed under these assumptions, see e.g. \cite[Theorem 7.2]{daprato2014} and \cite[Proposition 6.20]{fabbri2017}, respectively.
\end{remark}

\section{BSDEs}\label{BSDEs}

In this section, we prove some basic results for scalar-valued BSDEs with infinite dimensional driving noise. For the corresponding theory when the driving noise is finite dimensional, see e.g. \cite{elkaroui1997_2,pardoux2014,zhang2017}.

\subsection{Explicit Solution of Linear BSDE}

First, we need the following explicit solution formula for linear BSDEs. Consider the equation
\begin{equation}\label{linearbsde}
\begin{cases}
	\mathrm{d}Y_s = -[ a_s Y_s + b_s + \langle c_s,Z_s\rangle_{\Xi} ]\mathrm{d}s + \langle Z_s,\mathrm{d}W_s \rangle_{\Xi}.\quad s\in [t,T]\\
	Y_T= \eta\in L^2(\Omega),
\end{cases}
\end{equation}
where $a,b:[t,T]\times\Omega \to \mathbb{R}$ and $c:[t,T]\times\Omega\to \Xi$ are progressively measurable processes and $\eta$ is $\mathcal{F}_T$-measurable. Throughout this section, we assume that $a$ and $c$ are uniformly bounded in $(s,\omega)\in [t,T]\times \Omega$, and that $b$ is square-integrable, i.e., $\mathbb{E} [ \int_t^T |b_s|^2 \mathrm{d}s ] < \infty$. Under these assumptions, the linear BSDE \eqref{linearbsde} has a unique solution, see \cite[Proposition 6.20]{fabbri2017}. Let
\begin{equation}\label{gamma}
	\Gamma_s = \exp \left [ \int_t^s a_r - \frac12 \| c_r \|_{\Xi}^2 \mathrm{d}r + \int_t^s \langle c_r, \mathrm{d}W_r \rangle_{\Xi} \right ].
\end{equation}
Then
\begin{align}
	\mathrm{d}\Gamma_s &= \Gamma_s a_s \mathrm{d}s + \Gamma_s \langle c_s,\mathrm{d}W_s\rangle_{\Xi}\\
	\mathrm{d}\Gamma_s^{-1}& = \Gamma_s^{-1} (-a_s + \|c_s\|_{\Xi}^2)\mathrm{d}s - \Gamma_s^{-1} \langle c_s,\mathrm{d}W_s\rangle_{\Xi}.
\end{align}
By \cite[Proposition 6.18]{fabbri2017}, there exists an adapted process $V\in L^2([t,T]\times\Omega; \Xi)$ such that
\begin{equation}\label{representationtheorem}
	\Gamma_T \eta + \int_t^T \Gamma_s b_s \mathrm{d}s = \mathbb{E} \left [ \Gamma_T \eta + \int_t^T \Gamma_s b_s \mathrm{d}s \right ] + \int_t^T \langle V_s, \mathrm{d}W_s \rangle_{\Xi}.
\end{equation}
\begin{proposition}\label{linearequation}
	The solution of the linear BSDE \eqref{linearbsde} is given by
	\begin{equation}\label{formulaYZ}
	\begin{cases}
		Y_s = \Gamma_s^{-1} \mathbb{E} \left [ \Gamma_T \eta + \int_s^T \Gamma_r b_r \mathrm{d}r \middle | \mathcal{F}_s \right ]\\
		Z_s = \Gamma_s^{-1} V_s - c_s Y_s.
	\end{cases}
	\end{equation}
\end{proposition}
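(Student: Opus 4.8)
The plan is to treat $\Gamma$ as a (stochastic) integrating factor that simultaneously removes the linear-in-$Y$ term and the $\langle c_s, Z_s\rangle_\Xi$ term from the driver of \eqref{linearbsde}, thereby reducing the equation to the computation of a conditional expectation. Concretely, I would let $(Y,Z)$ denote the solution of \eqref{linearbsde} (which exists and is unique under the standing assumptions) and study the product $\Gamma_s Y_s$, using the dynamics of $\Gamma$ recorded after \eqref{gamma} together with the martingale representation \eqref{representationtheorem}.

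First I would apply It\^o's product formula to $\Gamma_s Y_s$. Combining $\mathrm{d}\Gamma_s = \Gamma_s a_s\,\mathrm{d}s + \Gamma_s\langle c_s,\mathrm{d}W_s\rangle_\Xi$ with the equation \eqref{linearbsde} for $\mathrm{d}Y_s$, and noting that the quadratic covariation of the martingale parts is $\mathrm{d}\langle\Gamma,Y\rangle_s = \Gamma_s\langle c_s,Z_s\rangle_\Xi\,\mathrm{d}s$, the drift terms cancel: the two $\Gamma_s a_s Y_s$ contributions cancel against each other, and $-\Gamma_s\langle c_s,Z_s\rangle_\Xi$ cancels against the covariation term. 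What remains is
\begin{equation*}
  \mathrm{d}(\Gamma_s Y_s) = -\Gamma_s b_s\,\mathrm{d}s + \Gamma_s\langle c_s Y_s + Z_s,\mathrm{d}W_s\rangle_\Xi.
\end{equation*}
Hence $M_s := \Gamma_s Y_s + \int_t^s \Gamma_r b_r\,\mathrm{d}r$ has vanishing drift, i.e.\ it is a local martingale with $\mathrm{d}M_s = \Gamma_s\langle c_s Y_s + Z_s,\mathrm{d}W_s\rangle_\Xi$. Once $M$ is known to be a genuine martingale, the identity $M_s = \mathbb{E}[M_T\mid\mathcal{F}_s]$ together with $Y_T = \eta$ and the $\mathcal{F}_s$-measurability of $\int_t^s \Gamma_r b_r\,\mathrm{d}r$ gives $\Gamma_s Y_s = \mathbb{E}[\Gamma_T\eta + \int_s^T \Gamma_r b_r\,\mathrm{d}r\mid\mathcal{F}_s]$, which is exactly the first line of \eqref{formulaYZ} after multiplying by $\Gamma_s^{-1}$.

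To recover $Z$, I would compare the two representations of the martingale $M$. Since $\Gamma_t = 1$ and the time-integral vanishes at $s=t$, we have $M_t = Y_t = \mathbb{E}[M_T]$, so the martingale representation \eqref{representationtheorem} reads $M_s = M_t + \int_t^s\langle V_r,\mathrm{d}W_r\rangle_\Xi$, while the It\^o computation above gives $M_s = M_t + \int_t^s\Gamma_r\langle c_r Y_r + Z_r,\mathrm{d}W_r\rangle_\Xi$. Equating the two stochastic integrals for every $s$ and invoking the uniqueness of the integrand in the It\^o representation (It\^o isometry) yields $V_r = \Gamma_r(c_r Y_r + Z_r)$ for a.e.\ $(r,\omega)$, and solving for $Z$ gives the second line of \eqref{formulaYZ}.

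The main obstacle is the integrability bookkeeping needed to pass from the local martingale to a true martingale and to guarantee that all stochastic integrals and conditional expectations above are well defined. This requires controlling the moments of $\Gamma$ and $\Gamma^{-1}$ — each an exponential of a stochastic integral — against the $L^2$-bounds on $(Y,Z)$ and the square-integrability of $\Gamma_T\eta + \int_t^T\Gamma_r b_r\,\mathrm{d}r$ that underlies \eqref{representationtheorem}. Under boundedness of $a$ and $c$ (as is the case in the applications of this proposition) $\Gamma$ has moments of all orders, and the localization argument closes via a standard uniform-integrability or dominated-convergence estimate.
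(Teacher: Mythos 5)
Your proof is correct, but it runs in the opposite direction to the paper's. The paper takes the pair $(Y,Z)$ defined by \eqref{formulaYZ}, substitutes the martingale representation \eqref{representationtheorem} with $V_r = \Gamma_r Z_r + \Gamma_r c_r Y_r$, and applies It\^o's formula to \emph{verify} that this candidate satisfies \eqref{linearbsde}; the conclusion that it is \emph{the} solution then rests on the known well-posedness of the BSDE. You instead start from the solution and \emph{derive} the formula: It\^o's product rule applied to $\Gamma_s Y_s$ kills the $a_s Y_s$ and $\langle c_s, Z_s\rangle_{\Xi}$ terms, so $\Gamma_s Y_s + \int_t^s \Gamma_r b_r\,\mathrm{d}r$ is a local martingale, whence the conditional-expectation formula for $Y$, and matching integrands against \eqref{representationtheorem} identifies $Z$. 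The underlying cancellation is identical, but your direction buys a little more: it shows directly that \emph{any} solution must have the form \eqref{formulaYZ}, without invoking uniqueness of the BSDE separately, at the price of having to justify that the local martingale is a true martingale and that the two It\^o integrands agree $\mathrm{d}s\otimes\mathbb{P}$-a.e. Your closing remark on integrability is the right caveat, and it is one the paper's own proof also leaves implicit (square-integrability of $\Gamma_T\eta + \int_t^T\Gamma_r b_r\,\mathrm{d}r$ is already needed for \eqref{representationtheorem} to apply); under bounded $a$ and $c$, as in the application in Proposition \ref{comparison}, the moment bounds on $\Gamma$ and $\Gamma^{-1}$ close both arguments.
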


\begin{proof}
	Using \eqref{representationtheorem}, we derive from \eqref{formulaYZ} that
	\begin{align}
		Y_s &= \Gamma_s^{-1} \left ( \mathbb{E} \left [ \Gamma_T \eta + \int_t^T \Gamma_r b_r \mathrm{d}r \right ] - \int_t^s \Gamma_r b_r \mathrm{d}r + \int_t^s \langle V_r, \mathrm{d}W_r \rangle_{\Xi} \right )\\
		&= \Gamma_s^{-1} \left ( \mathbb{E} \left [ \Gamma_T \eta + \int_t^T \Gamma_r b_r \mathrm{d}r \right ] - \int_t^s \Gamma_r b_r \mathrm{d}r + \int_t^s \langle \Gamma_r Z_r + \Gamma_r c_r Y_r, \mathrm{d}W_r \rangle_{\Xi} \right ).
	\end{align}
	Applying It\^o's formula yields
	\begin{align}
		\mathrm{d}Y_s &= \left [ \Gamma_s^{-1} \left ( -a_s \mathrm{d}s + \|c_s\|_{\Xi}^2 \mathrm{d}s \right ) - \Gamma_s^{-1} \langle c_s,\mathrm{d}W_s \rangle_{\Xi} \right ] \Gamma_s Y_s\\
		&\qquad + \Gamma_s^{-1} [ \langle \Gamma_s Y_s c_s + \Gamma_s Z_s, \mathrm{d}W_s \rangle_{\Xi} - \Gamma_s b_s \mathrm{d}s ] - \Gamma_s^{-1} \langle c_s, c_s \Gamma_s Y_s +\Gamma_s Z_s \rangle_{\Xi} \mathrm{d}s\\
		&= -[a_s Y_s + b_s + \langle c_s,Z_s \rangle_{\Xi} ] \mathrm{d}s + \langle Z_s,\mathrm{d}W_s \rangle_{\Xi}.
	\end{align}
	Since $Y_T = \eta$, this proves that $(Y,Z)$ as given in \eqref{formulaYZ} solves the BSDE \eqref{linearbsde}.
\end{proof}

\subsection{Comparison for BSDEs}

In the remainder of this section, let $F:\Omega\times[t,T]\times \mathbb{R} \times \Xi \to \mathbb{R}$ be $\mathcal{P}\times \mathcal{B}(\mathbb{R}\times \Xi )/\mathcal{B}(\mathbb{R})$-measurable, where $\mathcal{P}$ denotes the progressive $\sigma$-algebra on $\Omega\times [t,T]$ and $\mathcal{B}(\Lambda)$ denotes the Borel $\sigma$-algebra of any topological space $\Lambda$. Furthermore, assume that there exists a constant $C>0$ such that
\begin{equation}
	| F(s,y,z) - F(s,y^{\prime},z^{\prime}) | \leq C(|y-y^{\prime}| + \|z-z^{\prime}\|_{\Xi} )
\end{equation}
$\mathbb{P}$-almost surely and for every $s\in [t,T]$, $y,y^{\prime}\in \mathbb{R}$ and $z,z^{\prime}\in \Xi$. Finally, assume that
\begin{equation}
	\mathbb{E} \left [ \int_t^T | F(s,0,0)|^2 \mathrm{d}s \right ] <\infty.
\end{equation}
We consider the BSDE
\begin{equation}\label{bsde1}
	\begin{cases}
		\mathrm{d}Y_s =F (s,Y_s,Z_s) \mathrm{d}s + \langle Z_s, \mathrm{d}W_s\rangle_{\Xi},\quad s\in [t,T]\\
		Y_T = \eta\in L^2(\Omega),
	\end{cases}
\end{equation}
where $\eta$ is $\mathcal{F}_T$-measurable.
\begin{definition}\label{definition_subsupersolution_1}
	A pair $(Y,Z)$ of adapted processes $Y\in L^2(\Omega;C([t,T]))$ and $Z\in L^2([t,T]\times\Omega; \Xi)$ is a supersolution of the BSDE \eqref{bsde1} if for every $t\leq s<r\leq T$ it holds
	\begin{equation}\label{definitionsupersolution1}
	\begin{cases}
		Y_r \leq Y_s +\int_s^r F({s^{\prime}},Y_{s^{\prime}},Z_{s^{\prime}}) \mathrm{d}{s^{\prime}} +\int_s^r \langle Z_{s^{\prime}},\mathrm{d}W_{s^{\prime}} \rangle_{\Xi}\\
		Y_T = \eta.
	\end{cases}
	\end{equation}
	A pair $(Y,Z)$ of adapted processes $Y\in L^2(\Omega;C([t,T]))$ and $Z\in L^2([t,T]\times\Omega; \Xi)$ is a subsolution of the BSDE \eqref{bsde1} if for every $t\leq s<r\leq T$ it holds
	\begin{equation}\label{definitionsubsolution1}
	\begin{cases}
		Y_r \geq Y_s +\int_s^r F({s^{\prime}},Y_{s^{\prime}},Z_{s^{\prime}}) \mathrm{d}{s^{\prime}} +\int_s^r \langle Z_{s^{\prime}},\mathrm{d}W_{s^{\prime}} \rangle_{\Xi}\\
		Y_T = \eta.
	\end{cases}
	\end{equation}
\end{definition}
Note that this definition is equivalent with the following definition.
\begin{definition}
	A triple $(Y,Z,I)$ of adapted processes $Y\in L^2(\Omega;C([t,T]))$, $Z\in L^2([t,T]\times\Omega; \Xi)$ and $I\in L^2(\Omega; C([t,T]))$ is a supersolution of the BSDE \eqref{bsde1} if $I(t)=0$ $\mathbb{P}$-almost surely, $I$ is increasing, and for every $s\in [t,T]$ it holds
	\begin{equation}\label{definitionsupersolution2}
		Y_s = \eta - \int_s^T F(r,Y_r,Z_r) \mathrm{d}r + I_T - I_s - \int_s^T \langle Z_r,\mathrm{d}W_r \rangle_{\Xi}.
	\end{equation}
	A triple $(Y,Z,D)$ of adapted processes $Y\in L^2(\Omega;C([t,T]))$, $Z\in L^2([t,T]\times\Omega; \Xi)$ and $D\in L^2(\Omega; C([t,T]))$ is a subsolution of the BSDE \eqref{bsde1} if $D(t)=0$ $\mathbb{P}$-almost surely, $D$ is decreasing, and for every $s\in [t,T]$ it holds
	\begin{equation}\label{definitionsubsolution2}
		Y_s = \eta - \int_s^T F(r,Y_r,Z_r) \mathrm{d}r + D_T - D_s - \int_s^T \langle Z_r,\mathrm{d}W_r \rangle_{\Xi}.
	\end{equation}
\end{definition}

\begin{remark}
	The fact that \eqref{definitionsupersolution1} and \eqref{definitionsubsolution1} follow from \eqref{definitionsupersolution2} and \eqref{definitionsubsolution2}, respectively, is immediate. For the opposite direction, consider the process
	\begin{equation}
		s \mapsto Y_t + \int_t^s F(r,Y_r,Z_r) \mathrm{d}r + \int_t^s \langle Z_r, \mathrm{d}W_r \rangle_{\Xi} - Y_s.
	\end{equation}
	Note that this process is increasing if $(Y,Z)$ is a supersolution in the sense of Definition \ref{definition_subsupersolution_1} and decreasing if $(Y,Z)$ is a subsolution in the sense of Definition \ref{definition_subsupersolution_1}. Moreover, using this process as $I$ and $D$, respectively, equations \eqref{definitionsupersolution2} and \eqref{definitionsubsolution2} are satisfied.
\end{remark}
In the case of linear BSDEs, we have the following comparison result.
\begin{proposition}\label{comparisonlinear}
	Let $(Y,Z)$ be a supersolution of the linear BSDE \eqref{linearbsde}. Then it holds
	\begin{equation}\label{domination}
		Y_s \geq \Gamma_s^{-1} \mathbb{E} \left [ \Gamma_T \eta + \int_s^T \Gamma_r b_r \mathrm{d}r \middle | \mathcal{F}_s \right ],
	\end{equation}
	i.e., a supersolution to the linear BSDE \eqref{linearbsde} dominates the solution to the linear BSDE.
\end{proposition}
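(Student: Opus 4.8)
The plan is to reduce the asserted comparison to a nonnegativity statement by multiplying the supersolution $Y$ by the strictly positive weight $\Gamma$ and isolating the increasing ``slack'' process supplied by the equivalent formulation of a supersolution. First I would invoke the equivalent definition \eqref{definitionsupersolution2}: writing the generator of the linear BSDE \eqref{linearbsde} as $F(s,y,z) = -[a_s y + b_s + \langle c_s,z\rangle_{\Xi}]$, a supersolution $(Y,Z)$ comes with an adapted, increasing process $I$ with $I_t = 0$ such that, in differential form,
\[
\mathrm{d}Y_s = -[a_s Y_s + b_s + \langle c_s,Z_s\rangle_{\Xi}]\,\mathrm{d}s - \mathrm{d}I_s + \langle Z_s,\mathrm{d}W_s\rangle_{\Xi}.
\]

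Next I would apply It\^o's product formula to $\Gamma_s Y_s$, using the dynamics $\mathrm{d}\Gamma_s = \Gamma_s a_s\,\mathrm{d}s + \Gamma_s\langle c_s,\mathrm{d}W_s\rangle_{\Xi}$ recorded just before the statement. The key algebraic cancellations are that the drift $-\Gamma_s a_s Y_s\,\mathrm{d}s$ coming from $\Gamma_s\,\mathrm{d}Y_s$ is killed by the contribution $Y_s\,\mathrm{d}\Gamma_s = \Gamma_s a_s Y_s\,\mathrm{d}s + \Gamma_s Y_s\langle c_s,\mathrm{d}W_s\rangle_{\Xi}$, while the term $-\Gamma_s\langle c_s,Z_s\rangle_{\Xi}\,\mathrm{d}s$ is cancelled by the cross-variation $\mathrm{d}\langle\Gamma,Y\rangle_s = \Gamma_s\langle c_s,Z_s\rangle_{\Xi}\,\mathrm{d}s$. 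What remains is
\[
\mathrm{d}(\Gamma_s Y_s) = -\Gamma_s b_s\,\mathrm{d}s - \Gamma_s\,\mathrm{d}I_s + \Gamma_s\langle Z_s + Y_s c_s,\mathrm{d}W_s\rangle_{\Xi}.
\]
Integrating from $s$ to $T$ and using $Y_T = \eta$ gives the identity $\Gamma_s Y_s = \Gamma_T\eta + \int_s^T \Gamma_r b_r\,\mathrm{d}r + \int_s^T \Gamma_r\,\mathrm{d}I_r - \int_s^T \Gamma_r\langle Z_r + Y_r c_r,\mathrm{d}W_r\rangle_{\Xi}$.

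I would then take $\mathbb{E}[\,\cdot\mid\mathcal{F}_s]$ on both sides. Since $\Gamma_s$ and $Y_s$ are $\mathcal{F}_s$-measurable the left-hand side is unchanged; the stochastic integral drops out; and, because $\Gamma_r > 0$ and $I$ is increasing, the term $\mathbb{E}[\int_s^T \Gamma_r\,\mathrm{d}I_r\mid\mathcal{F}_s]\geq 0$ may be discarded to obtain $\Gamma_s Y_s \geq \mathbb{E}[\Gamma_T\eta + \int_s^T \Gamma_r b_r\,\mathrm{d}r\mid\mathcal{F}_s]$. Dividing by $\Gamma_s > 0$ yields exactly \eqref{domination}; note that equality holds precisely when $I\equiv 0$, recovering the solution formula \eqref{formulaYZ} of Proposition \ref{linearequation}, which justifies the phrase that a supersolution dominates the solution.

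The main obstacle is the martingale step: a priori $\int_s^{\,\cdot} \Gamma_r\langle Z_r + Y_r c_r,\mathrm{d}W_r\rangle_{\Xi}$ is only a \emph{local} martingale, since $\Gamma$ is an exponential and no uniform bound on $a$ and $c$ has been imposed, so its conditional expectation need not vanish automatically. I would handle this by localizing along a sequence of stopping times $\tau_n\uparrow T$ reducing the local martingale, establishing the identity on $[s,\tau_n]$, and then passing to the limit: the increasing term is controlled by monotone convergence, and the remaining terms by the $L^2$-integrability built into the definition of a supersolution together with suitable integrability of $\Gamma$ (which, in the intended applications, follows from boundedness of $a$ and $c$ arising from Lipschitz constants).
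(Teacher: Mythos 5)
Your proposal is correct and follows essentially the same route as the paper: the paper's proof consists precisely of applying It\^o's formula to conclude that $\Gamma_s Y_s + \int_t^s \Gamma_r b_r\,\mathrm{d}r$ is a local supermartingale and then deducing \eqref{domination}, which is exactly the computation you carry out in detail. Your explicit treatment of the drift cancellations and of the localization needed to pass from a local to a genuine supermartingale fills in the steps the paper leaves to the reader.
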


\begin{proof}
	Let $(Y,Z,I)$ be a supersolution of the linear BSDE \eqref{linearbsde}. Then an application of It\^o's formula yields that
	\begin{equation}
		\Gamma_s Y_s + \int_t^s \Gamma_r b_r \mathrm{d}r
	\end{equation}
	is a local supermartingale. From this fact, one easily deduces \eqref{domination}.
\end{proof}
Next, let us prove a comparison result for sub- and supersolutions.
\begin{proposition}\label{comparison}
	Let $(F^i,\eta^i)$, $i=1,2$, satisfy the same assumptions as $(F,\eta)$ above, and let $(Y^1,Z^1)$ and $(Y^2,Z^2)$ be a sub- and supersolution of the BSDE \eqref{bsde1} associated with $(F^1,\eta^1)$ and $(F^2,\eta^2)$, respectively. Assume
	\begin{enumerate}[label=(\roman*)]
		\item $\eta^2 \geq \eta^1$
		\item $F^2(s,Y^2_s, Z^2_s) \leq F^1(s,Y^2_s, Z^2_s)$ $\mathrm{d}s\otimes\mathbb{P}$-almost surely.
	\end{enumerate}
	Then for every $s\in [t,T]$, it holds $Y^2_s \geq Y^1_s$ $\mathbb{P}$-almost surely. Moreover, if there exists an $s\in [t,T]$ such that
	\begin{equation}\label{strict}
		\eta^2 - \eta^1 + \int_{s}^T F^{1}(r, Y^2_r, Z^2_r) - F^{2}(r, Y^2_r, Z^2_r) \mathrm{d}r > 0
	\end{equation}
	$\mathbb{P}$-almost surely, then $Y^2_{s} > Y^1_{s}$ $\mathbb{P}$-almost surely.
\end{proposition}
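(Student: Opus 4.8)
The plan is to linearize the difference of the two solutions and then invoke the linear comparison result, Proposition~\ref{comparisonlinear}. Let $(Y^2,Z^2,I)$ and $(Y^1,Z^1,D)$ denote the triples from \eqref{definitionsupersolution2} and \eqref{definitionsubsolution2} representing the supersolution and the subsolution, so that $I$ is increasing, $D$ is decreasing, and $I_t=D_t=0$. Set $\bar Y_s:=Y^2_s-Y^1_s$, $\bar Z_s:=Z^2_s-Z^1_s$ and $\bar\eta:=\eta^2-\eta^1$, which is nonnegative by assumption~(i). Subtracting the two defining identities gives, for every $s\in[t,T]$,
\[
\bar Y_s=\bar\eta-\int_s^T\!\big[F^2(r,Y^2_r,Z^2_r)-F^1(r,Y^1_r,Z^1_r)\big]\,\mathrm dr+(I_T-I_s)-(D_T-D_s)-\int_s^T\langle \bar Z_r,\mathrm dW_r\rangle_\Xi .
\]

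Next I would linearize the generator term. Writing $F^2(r,Y^2_r,Z^2_r)-F^1(r,Y^1_r,Z^1_r)$ as the telescoping sum of $F^2(r,Y^2_r,Z^2_r)-F^1(r,Y^2_r,Z^2_r)$, of $F^1(r,Y^2_r,Z^2_r)-F^1(r,Y^1_r,Z^2_r)$, and of $F^1(r,Y^1_r,Z^2_r)-F^1(r,Y^1_r,Z^1_r)$, I introduce the progressively measurable processes $a_r:=\frac{F^1(r,Y^2_r,Z^2_r)-F^1(r,Y^1_r,Z^2_r)}{\bar Y_r}\mathbf{1}_{\{\bar Y_r\neq0\}}$ and $c_r:=\frac{F^1(r,Y^1_r,Z^2_r)-F^1(r,Y^1_r,Z^1_r)}{\|\bar Z_r\|_\Xi^2}\,\bar Z_r\,\mathbf{1}_{\{\bar Z_r\neq0\}}$, so that the second and third differences become $a_r\bar Y_r$ and $\langle c_r,\bar Z_r\rangle_\Xi$. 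By the Lipschitz assumption on $F^1$ one has $|a_r|\le C$ and $\|c_r\|_\Xi\le C$, and these uniform bounds ensure that $\Gamma$ from \eqref{gamma} and the data satisfy the integrability requirements of Proposition~\ref{comparisonlinear}. Finally set $\tilde b_r:=F^1(r,Y^2_r,Z^2_r)-F^2(r,Y^2_r,Z^2_r)$, which is nonnegative by assumption~(ii), and $\hat I:=I-D$, which is increasing with $\hat I_t=0$. The construction of $c_r$ as a genuine element of $\Xi$ (rather than a finite collection of scalar factors, as in the finite-dimensional theory) is the one step that requires care; measurability and the uniform bound follow from the Lipschitz estimate once the division by $\|\bar Z_r\|_\Xi^2$ is handled on $\{\bar Z_r\neq0\}$.

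Substituting these identities, the displayed equation becomes exactly the supersolution relation \eqref{definitionsupersolution2} for the linear BSDE \eqref{linearbsde} with coefficients $(-a,\tilde b,-c)$, terminal value $\bar\eta$, and increasing process $\hat I$; equivalently $\bar Y$ is a supersolution of that linear equation. Proposition~\ref{comparisonlinear} then yields $\bar Y_s\ge \Gamma_s^{-1}\mathbb E\big[\Gamma_T\bar\eta+\int_s^T\Gamma_r\tilde b_r\,\mathrm dr\,\big|\,\mathcal F_s\big]$. Since $\Gamma$ is strictly positive and both $\bar\eta\ge0$ and $\tilde b_r\ge0$, the right-hand side is nonnegative, whence $Y^2_s\ge Y^1_s$ $\mathbb P$-almost surely, for every $s$.

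For the strict statement, note that the quantity in \eqref{strict} equals $\bar\eta+\int_s^T\big(F^2(r,Y^2_r,Z^2_r)-F^1(r,Y^2_r,Z^2_r)\big)\,\mathrm dr=\bar\eta-\int_s^T\tilde b_r\,\mathrm dr$. If this is strictly positive $\mathbb P$-almost surely then, because $\tilde b_r\ge0$, necessarily $\bar\eta>0$ $\mathbb P$-almost surely, so $\Gamma_T\bar\eta+\int_s^T\Gamma_r\tilde b_r\,\mathrm dr>0$ $\mathbb P$-almost surely; a strictly positive integrable random variable has a strictly positive conditional expectation, so the lower bound from Proposition~\ref{comparisonlinear} forces $\bar Y_s>0$, i.e.\ $Y^2_s>Y^1_s$ $\mathbb P$-almost surely. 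The main obstacle throughout is the infinite-dimensional linearization of the $Z$-variable and the verification that the resulting random coefficients fit the hypotheses of Proposition~\ref{comparisonlinear}; once that is in place the conclusion is a direct consequence of the positivity of $\Gamma$ and of the data.
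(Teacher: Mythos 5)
Your proposal is correct and follows essentially the same route as the paper: linearize the difference of the generators via a telescoping decomposition with bounded coefficients $a_r$, $c_r$ and a nonnegative drift term, recognize $Y^2-Y^1$ as a supersolution of the resulting linear BSDE, and conclude from Proposition \ref{comparisonlinear} together with the positivity of $\Gamma$. The only differences are cosmetic (the order of the telescoping and the sign conventions for the linear coefficients), and your spelled-out argument for the strict inequality, which the paper leaves as a one-line remark, is a valid reading of \eqref{strict}.
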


\begin{proof}
	Note that
	\begin{equation}
		Y^2_r - Y^1_r \leq Y^2_s - Y^1_s - \int_s^r a_{s^{\prime}}(Y^2_{s^{\prime}} - Y^1_{s^{\prime}}) + b_{s^{\prime}} + \langle c_{s^{\prime}}, Z^2_{s^{\prime}}- Z^1_{s^{\prime}} \rangle_{\Xi} \mathrm{d}{s^{\prime}} + \int_s^r \langle Z^2_{s^{\prime}}-Z^1_{s^{\prime}}, \mathrm{d}W_{s^{\prime}}\rangle_{\Xi},
	\end{equation}
	where
	\begin{equation}
		a_{s^{\prime}} = \frac{1}{Y^2_{s^{\prime}} - Y^1_{s^{\prime}}}(F^1({s^{\prime}},Y^1_{s^{\prime}},Z^1_{s^{\prime}})-F^1({s^{\prime}},Y^2_{s^{\prime}},Z^1_{s^{\prime}})) \mathbf{1}_{\{Y^2_{s^{\prime}} \neq Y^1_{s^{\prime}}\}}
	\end{equation}
	and
	\begin{equation}
		b_{s^{\prime}} =  F^1({s^{\prime}},Y^2_{s^{\prime}},Z^2_{s^{\prime}})-F^2({s^{\prime}},Y^2_{s^{\prime}},Z^2_{s^{\prime}}) 
	\end{equation}
	and
	\begin{equation}
		c_{s^{\prime}} = \frac{Z^2_{s^{\prime}}-Z^1_{s^{\prime}}}{\|Z^2_{s^{\prime}} - Z^1_{s^{\prime}}\|_{\Xi}^2} (F^1({s^{\prime}},Y^2_{s^{\prime}},Z^1_{s^{\prime}})-F^1({s^{\prime}},Y^2_{s^{\prime}},Z^2_{s^{\prime}}) ) \mathbf{1}_{\{Z^2_{s^{\prime}}\neq Z^1_{s^{\prime}}\}}.
	\end{equation}
	Therefore, we obtain from Proposition \ref{comparisonlinear} and assumptions $(i)$ and $(ii)$
	\begin{equation}
		Y^2_s-Y^1_s \geq \Gamma_s^{-1} \mathbb{E} \left [ \Gamma_T (\eta^2- \eta^1) + \int_s^T \Gamma_r b_r \mathrm{d}r \middle | \mathcal{F}_s \right ]\geq 0,
	\end{equation}
	where $\Gamma$ is given by equation \eqref{gamma}. The strict inequality follows from \eqref{strict}.
\end{proof}

\subsection{A Priori Estimates for BSDEs}

\begin{lemma}\label{BSDEaprioriestimate}
	Let $(Y,Z)$ be the solution of the BSDE \eqref{bsde1}. Then it holds
	\begin{equation}
		\mathbb{E} \left [ \sup_{s\in [t,T]} |Y_s|^2 + \int_t^T \| Z_s \|_{\Xi}^2 \mathrm{d}s \right ] \leq C \mathbb{E} \left [ |\eta|^2 + \left ( \int_t^T |F(s,0,0)| \mathrm{d}s \right )^2 \right ].
	\end{equation}
\end{lemma}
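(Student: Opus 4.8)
The plan is to apply It\^o's formula to $|Y_s|^2$ and then close the estimate by combining the Lipschitz assumption on $F$ with Young's inequality, the Burkholder--Davis--Gundy (BDG) inequality, and Gronwall's lemma. Writing $F^0_r:=F(r,0,0)$ and using the integral form $Y_s=\eta-\int_s^T F(r,Y_r,Z_r)\,\mathrm{d}r-\int_s^T\langle Z_r,\mathrm{d}W_r\rangle_\Xi$, It\^o's formula gives, for every $s\in[t,T]$,
\[
|Y_s|^2+\int_s^T\|Z_r\|_\Xi^2\,\mathrm{d}r=|\eta|^2-2\int_s^T Y_r F(r,Y_r,Z_r)\,\mathrm{d}r-2\int_s^T Y_r\langle Z_r,\mathrm{d}W_r\rangle_\Xi.
\]
Since the quadratic variation $\int_t^T|Y_r|^2\|Z_r\|_\Xi^2\,\mathrm{d}r$ of the stochastic integral need not be integrable a priori, I would first localize with stopping times $\tau_n:=\inf\{s\ge t:\int_t^s\|Z_r\|_\Xi^2\,\mathrm{d}r\ge n\}\wedge T$, carry out all estimates on $[t,\tau_n]$ (where, because $Y\in L^2(\Omega;C([t,T]))$, the integral is a genuine martingale with vanishing expectation), and pass to the limit $n\to\infty$ at the end via monotone convergence and Fatou's lemma.

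Next I would estimate the drift. The Lipschitz assumption yields $|F(r,Y_r,Z_r)|\le|F^0_r|+C|Y_r|+C\|Z_r\|_\Xi$, whence $2|Y_r|\,|F(r,Y_r,Z_r)|\le 2|Y_r|\,|F^0_r|+2C|Y_r|^2+2C|Y_r|\,\|Z_r\|_\Xi$. Applying Young's inequality to the last term as $2C|Y_r|\,\|Z_r\|_\Xi\le\tfrac12\|Z_r\|_\Xi^2+2C^2|Y_r|^2$ lets me absorb half of $\int_s^T\|Z_r\|_\Xi^2\,\mathrm{d}r$ into the left-hand side, leaving a factor $\kappa:=2C+2C^2$ in front of $\int_s^T|Y_r|^2\,\mathrm{d}r$. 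Crucially, to reach the stated right-hand side I would \emph{not} split $2|Y_r||F^0_r|$ into squares pointwise (which would produce the weaker $\int_t^T|F^0_r|^2\,\mathrm{d}r$); instead I keep the quantity $K:=\int_t^T|F^0_r|\,\mathrm{d}r$ intact, bounding $\int_s^T 2|Y_r||F^0_r|\,\mathrm{d}r\le 2K\sup_{u\in[t,T]}|Y_u|$, and reserve a single Young step $2K\sup_u|Y_u|\le\varepsilon\sup_u|Y_u|^2+\varepsilon^{-1}K^2$ for the very end.

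From here I would run two passes, both carrying $K$ intact. Taking expectations and dropping the nonnegative $Z$-integral on the left gives $\mathbb{E}|Y_s|^2\le\mathbb{E}|\eta|^2+\kappa\int_s^T\mathbb{E}|Y_r|^2\,\mathrm{d}r+2\,\mathbb{E}[K\sup_u|Y_u|]$, so Gronwall's lemma bounds $\sup_s\mathbb{E}|Y_s|^2$, and hence $\int_t^T\mathbb{E}|Y_r|^2\,\mathrm{d}r$, by $C(\mathbb{E}|\eta|^2+\mathbb{E}[K\sup_u|Y_u|])$. Taking instead the supremum over $s$ before expectation and estimating the martingale by BDG,
\begin{align*}
\mathbb{E}\sup_{s\in[t,T]}\left|\int_s^T Y_r\langle Z_r,\mathrm{d}W_r\rangle_\Xi\right|
&\le C_{\mathrm{BDG}}\,\mathbb{E}\left[\sup_{u}|Y_u|\Big(\int_t^T\|Z_r\|_\Xi^2\,\mathrm{d}r\Big)^{1/2}\right]\\
&\le \tfrac14\mathbb{E}\sup_{u}|Y_u|^2+C'\,\mathbb{E}\int_t^T\|Z_r\|_\Xi^2\,\mathrm{d}r,
\end{align*}
I obtain a single inequality for $\mathbb{E}\big[\sup_s|Y_s|^2+\int_t^T\|Z_r\|_\Xi^2\,\mathrm{d}r\big]$ whose right-hand side, after inserting the Gronwall bound, contains only $C\,\mathbb{E}|\eta|^2$, the term $C\,\mathbb{E}[K\sup_u|Y_u|]$, and small multiples of $\mathbb{E}\sup_u|Y_u|^2$ and $\mathbb{E}\int_t^T\|Z_r\|_\Xi^2\,\mathrm{d}r$. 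The main obstacle is exactly this self-reference: one must choose the BDG and Young constants small enough to absorb the last two terms into the left-hand side, and then apply the reserved Young step to $2K\sup_u|Y_u|$ (absorbing the $\sup$ term once more) so that the final contribution becomes $\varepsilon^{-1}\mathbb{E}K^2=\varepsilon^{-1}\mathbb{E}\big(\int_t^T|F^0_r|\,\mathrm{d}r\big)^2$. This yields exactly the claimed bound, and letting $n\to\infty$ completes the proof.
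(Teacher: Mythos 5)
The paper does not prove this lemma itself; it simply cites \cite[Proposition 4.3]{fuhrman2002}. Your strategy --- It\^o's formula for $|Y_s|^2$, keeping $K=\int_t^T|F(r,0,0)|\,\mathrm{d}r$ un-squared and paired with $\sup_u|Y_u|$, Burkholder--Davis--Gundy for the martingale part, Gr\"onwall, and a final absorption --- is exactly the standard argument behind that reference, and most of your steps (the localization, the drift estimate, the first Gr\"onwall pass) are fine.

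However, the way you close the estimate has a genuine flaw. After BDG you are left with a term of the form $2c\,\mathbb{E}\bigl[\sup_u|Y_u|\bigl(\int_t^T\|Z_r\|_\Xi^2\,\mathrm{d}r\bigr)^{1/2}\bigr]\le 2c\bigl(\tfrac{1}{4\delta}\,\mathbb{E}\sup_u|Y_u|^2+\delta\,\mathbb{E}\int_t^T\|Z_r\|_\Xi^2\,\mathrm{d}r\bigr)$, and you propose to ``choose the BDG and Young constants small enough to absorb the last two terms into the left-hand side.'' This cannot be done: the BDG constant $c$ is fixed, and Young's inequality forces a trade-off --- making the coefficient $2c\delta$ of $\mathbb{E}\int\|Z\|^2$ smaller than the $\tfrac12$ available on the left forces the coefficient of $\mathbb{E}\sup_u|Y_u|^2$ to be of order $c^2$, which need not be less than $1$, and choosing $\delta$ the other way leaves an unabsorbable multiple of $\mathbb{E}\int\|Z\|^2$. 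The standard repair is one your first pass almost delivers: do not drop the $Z$-integral there. Taking $s=t$ and expectations (where the stochastic integral vanishes, so no BDG constant enters) gives $\mathbb{E}\int_t^T\|Z_r\|_\Xi^2\,\mathrm{d}r\le \mathbb{E}|\eta|^2+\kappa\int_t^T\mathbb{E}|Y_r|^2\,\mathrm{d}r+2\,\mathbb{E}[K\sup_u|Y_u|]\le C\bigl(\mathbb{E}|\eta|^2+\mathbb{E}[K\sup_u|Y_u|]\bigr)$ after Gr\"onwall. Substituting this bound (rather than absorbing) for the $\mathbb{E}\int\|Z\|^2$ term produced by BDG, absorbing only the small multiple of $\mathbb{E}\sup_u|Y_u|^2$, and then applying your reserved Young step to $\mathbb{E}[K\sup_u|Y_u|]$ closes the proof. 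With that one correction the argument is complete.
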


The proof of this result can be found in \cite[Proposition 4.3]{fuhrman2002}.

\begin{lemma}\label{BSDEdependence}
	Let $(F^i,\eta^i)$, $i=1,2$, satisfy the same assumptions as $(F,\eta)$ above and let $(Y^1,Z^1)$ and $(Y^2,Z^2)$ be the solution of the BSDE \eqref{bsde1} associated with $(F^1,\eta^1)$ and $(F^2,\eta^2)$, respectively. Then it holds
	\begin{equation}
	\begin{split}
		&\mathbb{E} \left [ \sup_{s\in [t,T]} \left |Y^1_s - Y^2_s \right |^2 + \int_t^T \left  \| Z^1_s - Z^2_s \right \|_{\Xi}^2 \mathrm{d}s \right ]\\
		&\leq C\mathbb{E} \left [\left |\eta^1 - \eta^2\right |^2 + \left ( \int_t^T |F^1(r,Y^1_r,Z^1_r) - F^2(r,Y^1_r,Z^1_r)| \mathrm{d}r \right )^2 \right ].
	\end{split}
	\end{equation}
\end{lemma}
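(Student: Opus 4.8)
The plan is to reduce the desired stability estimate to the a priori bound already established in Lemma~\ref{BSDEaprioriestimate}. To this end I would show that the difference process $(\delta Y,\delta Z):=(Y^1-Y^2,Z^1-Z^2)$ is itself the solution of a BSDE of the form \eqref{bsde1}, with terminal condition $\eta^1-\eta^2$ and a driver that is Lipschitz in $(y,z)$ and whose value at $(y,z)=(0,0)$ is exactly the source term appearing on the right-hand side of the claim.

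First I would subtract the two equations to obtain that $(\delta Y,\delta Z)$ satisfies
\[
  \delta Y_s=\eta^1-\eta^2-\int_s^T\big[F^1(r,Y^1_r,Z^1_r)-F^2(r,Y^2_r,Z^2_r)\big]\,\mathrm{d}r-\int_s^T\langle\delta Z_r,\mathrm{d}W_r\rangle_{\Xi}.
\]
The key algebraic step is to split the driver as
\[
  F^1(s,Y^1_s,Z^1_s)-F^2(s,Y^2_s,Z^2_s)
  =\big[F^2(s,Y^1_s,Z^1_s)-F^2(s,Y^2_s,Z^2_s)\big]+\xi_s,
\]
where $\xi_s:=F^1(s,Y^1_s,Z^1_s)-F^2(s,Y^1_s,Z^1_s)$. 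The bracketed term is Lipschitz in $(\delta Y,\delta Z)$ with constant $C$ by the hypothesis on $F^2$, whereas $\xi_s$ plays the role of a source term. Accordingly I would define
\[
  \hat F(s,y,z):=F^2(s,y+Y^2_s,z+Z^2_s)-F^2(s,Y^2_s,Z^2_s)+\xi_s,
\]
so that $\hat F(s,\delta Y_s,\delta Z_s)$ equals the driver above; hence $(\delta Y,\delta Z)$ solves \eqref{bsde1} with the (random, predictable) driver $\hat F$ and terminal datum $\eta^1-\eta^2$.

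Next I would verify that $\hat F$ satisfies the standing hypotheses required to invoke Lemma~\ref{BSDEaprioriestimate}. The Lipschitz bound $|\hat F(s,y,z)-\hat F(s,y',z')|\leq C(|y-y'|+\|z-z'\|_{\Xi})$ is inherited directly from that of $F^2$, and the integrability $\mathbb{E}\int_t^T|\hat F(s,0,0)|^2\,\mathrm{d}s<\infty$ follows because $F^1,F^2$ are Lipschitz with $F^i(\cdot,0,0)\in L^2$ while $(Y^1,Z^1)\in L^2$, so that $\xi\in L^2([t,T]\times\Omega)$. Since $\hat F(s,0,0)=\xi_s=F^1(s,Y^1_s,Z^1_s)-F^2(s,Y^1_s,Z^1_s)$, applying Lemma~\ref{BSDEaprioriestimate} to $(\delta Y,\delta Z)$ yields precisely the asserted inequality.

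I do not expect a genuine obstacle here, as the argument is essentially bookkeeping. The one point deserving care is the choice to peel off the Lipschitz part using $F^2$ rather than $F^1$: this is exactly what forces the residual source $\xi_s$ to be evaluated at $(Y^1,Z^1)$, matching the right-hand side of the stated estimate. Splitting with $F^1$ instead would produce the analogous bound with $F^1-F^2$ evaluated along $(Y^2,Z^2)$, which is an equally valid but different statement.
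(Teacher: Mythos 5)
Your proof is correct and follows essentially the same route as the paper: both reduce the claim to Lemma~\ref{BSDEaprioriestimate} by exhibiting $(Y^1-Y^2,Z^1-Z^2)$ as the solution of a BSDE with terminal datum $\eta^1-\eta^2$ and a $C$-Lipschitz driver whose value at $(0,0)$ is $F^1(s,Y^1_s,Z^1_s)-F^2(s,Y^1_s,Z^1_s)$. The only cosmetic difference is that the paper isolates the Lipschitz part of the driver by linearizing $F^2$ with bounded difference-quotient coefficients, whereas you shift the arguments of $F^2$; both verifications feed identically into the a priori estimate.
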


\begin{proof}
	Note that for all $s\in [t,T]$ it holds
	\begin{equation}
		\begin{split}
			Y^1_s - Y^2_s
			&= \eta^1 - \eta^2 - \int_s^T F^1(r, Y^1_r,Z^1_r ) - F^2(r, Y^2_r, Z^2_r ) \mathrm{d}r - \int_s^T \langle Z^1_r -  Z^2_r, \mathrm{d}W_r \rangle_{\Xi}\\
			&= \eta^1 - \eta^2 - \int_s^T F^1(r, Y^1_r,Z^1_r ) - F^2(r, Y^1_r,Z^1_r ) \mathrm{d}r\\
			&\qquad - \int_s^T a_r (Y^1_r - Y^2_r) + b_r (Z^1_r - Z^2_r) \mathrm{d}r - \int_s^T \langle Z^1_r - Z^2_r, \mathrm{d}W_r \rangle_{\Xi},
		\end{split}
	\end{equation}
	where
	\begin{equation}
		a_r := \frac{ F^2(r, Y^1_r,Z^1_r ) - F^2(r,  Y^2_r,Z^1_r )}{Y^1_r - Y^2_r} \mathbf{1}_{\{Y^1_r \neq Y^2_r\}}
	\end{equation}
	and
	\begin{equation}
		b_r := \frac{F^2(r, Y^2_r,Z^1_r ) - F^2(r,  Y^2_r, Z^2_r )}{Z^1_r - Z^2_r} \mathbf{1}_{\{Z^1_r \neq Z^2_r\}}.
	\end{equation}
	Now, Lemma \ref{BSDEaprioriestimate} yields the result.
\end{proof}

\section{Feynman--Kac Formulae for $B$-Continuous Viscosity Solutions}\label{mainresult}

In this section, we prove that the stochastic representation formula \eqref{stochasticrepresentation} yields a $B$-continuous viscosity solution of the PDE \eqref{PDE}. Under additional assumptions, we establish uniqueness.

\subsection{$B$-Continuous Viscosity Solutions}

First, we recall the notion of $B$-continuous viscosity solutions. Let us begin by introducing the notion of $B$-continuity and the class of test functions we are using, see \cite[Definitions 3.3, 3.4, and 3.32]{fabbri2017}.
\begin{definition}
	Let $B\in L(H)$ be a strictly positive, self-adjoint operator on $H$. A function $u:(0,T) \times H\to \mathbb{R} \cup \{\pm \infty \}$ is $B$-upper semicontinuous (respectively, $B$-lower semicontinuous) if, for any sequences $(t_n)_{n\in \mathbb{N}}$ in $(0,T)$ and $(x_n)_{n\in\mathbb{N}}$ in $H$ such that $t_n\to t\in (0,T)$, $x_n\rightharpoonup x$ and $B x_n \to Bx$ as $n\to \infty$, we have
	\begin{equation}
		\limsup_{n\to\infty} u(t_n,x_n) \leq u(t,x) \qquad (\text{respectively,} \, \liminf_{n\to\infty} u(t_n,x_n) \geq u(t,x) ).
	\end{equation}
	A function $u:(0,T)\times H \to \mathbb{R}$ is $B$-continuous if it is both $B$-upper semicontinuous and $B$-lower semicontinuous.
\end{definition}

\begin{definition}\label{definitiontestfunction}
	A function $\psi$ is a test function if $\psi = \varphi +h(t,\|x\|_H)$, where:
	\begin{enumerate}[label=(\roman*)]
		\item $\varphi \in C^{1,2}((0,T)\times H)$ is locally bounded, and is such that $\varphi$ is $B$-lower semicontinuous, and $\varphi_t$, $A^{\ast} D\varphi$, $D\varphi$, $D^2\varphi$ are uniformly continuous on $(0,T)\times H$.
		\item $h\in C^{1,2}((0,T)\times \mathbb{R})$ and is such that for every $t\in (0,T)$, $h(t,\cdot)$ is even and $h(t,\cdot)$ is non-decreasing on $[0,+\infty)$.
	\end{enumerate}
\end{definition}
Now, we define the notion of $B$-continuous viscosity solution, see \cite[Definition 3.35]{fabbri2017}.
\begin{definition}\label{definitionviscositysolution}
	A locally bounded and upper semicontinuous function $u$ on $(0,T]\times H$ which is $B$-upper semicontinuous on $(0,T)\times H$ is a viscosity subsolution of \eqref{PDE} if $u(T,x^{\prime})\leq g(x^{\prime})$ for $x^{\prime}\in H$ and the following holds: whenever $u-\psi$ has a local maximum at a point $(t,x)\in (0,T)\times H$ for a test function $\psi$ as in Definition \ref{definitiontestfunction} then
	\begin{multline}
		\psi_t(t,x) +\langle x,A^{\ast} D\varphi(t,x)\rangle_H + \langle b(t,x), D\psi(t,x) \rangle_H\\
		+ \frac12 \text{tr} (\sigma^{\ast}(t,x) D^2\psi(t,x) \sigma(t,x)) - f(t,x,u(t,x),\sigma^{\ast}(t,x)D\psi(t,x)) \geq 0.
	\end{multline}
	A locally bounded and lower semicontinuous function $u$ on $(0,T]\times H$ which is $B$-lower semicontinuous on $(0,T)\times H$ is a viscosity supersolution of \eqref{PDE} if $u(T,x^{\prime})\geq g(x^{\prime})$ for $x^{\prime}\in H$ and the following holds: whenever $u+\psi$ has a local minimum at a point $(t,x)\in (0,T)\times H$ for a test function $\psi$ as in Definition \ref{definitiontestfunction} then
	\begin{multline}
		- \psi_t(t,x) - \langle x,A^{\ast} D\varphi(t,x)\rangle_H - \langle b(t,x), D\psi(t,x) \rangle_H\\
		- \frac12 \text{tr} (\sigma^{\ast}(t,x) D^2\psi(t,x) \sigma(t,x)) - f(t,x,u(t,x),-\sigma^{\ast}(t,x)D\psi(t,x)) \leq 0.
	\end{multline}
	A viscosity solution of \eqref{PDE} is a function which is both a viscosity subsolution and a viscosity supersolution of \eqref{PDE}.
\end{definition}

\subsection{Existence and Stochastic Representation}

First, we recall the following a priori estimates for the forward SPDE \eqref{forwardequation}, see \cite[Lemma 3.23]{fabbri2017} and \cite[Proposition 3.3]{fuhrman2002}, respectively.

\begin{lemma}\label{apriori1}
	Let Assumptions \ref{assumptionA} and \ref{assumptionB} be satisfied. Then, there exists a constant $C>0$ independent of $t\in [0,T]$ such that
	\begin{equation}
		\mathbb{E} \left [ \|X^{t,x}_T - X^{t,x^{\prime}}_T \|_{H_{-1}}^2 + \int_t^T \| X^{t,x}_s - X^{t,x^{\prime}}_s \|_H^2 \mathrm{d}s \right ] \leq C \| x - x^{\prime} \|_{H_{-1}}^2
	\end{equation}
	for all $x,x^{\prime}\in H$. Furthermore, there exists a constant $C>0$ such that
	\begin{equation}
		\mathbb{E} \left [ \| X^{t,x}_T - X^{t,x^{\prime}}_T \|_{H}^2 \right ] \leq \frac{C}{T-t} \| x - x^{\prime} \|_{H_{-1}}^2
	\end{equation}
	for every $t\in[0,T)$ and for all $x,x^{\prime}\in H$.
\end{lemma}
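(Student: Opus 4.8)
The plan is to derive both estimates from It\^o's formula applied to suitable functionals of the difference process $\xi_s := X^{t,x}_s - X^{t,x^{\prime}}_s$, which solves the linear-in-$\xi$ equation
\begin{equation}
	\mathrm{d}\xi_s = [A\xi_s + \Delta b_s]\,\mathrm{d}s + \Delta\sigma_s\,\mathrm{d}W_s, \qquad \xi_t = x - x^{\prime},
\end{equation}
where $\Delta b_s := b(s,X^{t,x}_s) - b(s,X^{t,x^{\prime}}_s)$ and $\Delta\sigma_s := \sigma(s,X^{t,x}_s) - \sigma(s,X^{t,x^{\prime}}_s)$. Since $X^{t,x}$ is only a mild solution and $A$ is unbounded, the It\^o expansions below are not literally valid: one must first regularize, e.g. by replacing $A$ with its Yosida approximations $A_n = nA(nI-A)^{-1}$, carry out all computations for the resulting strong solutions, and pass to the limit $n\to\infty$ at the end. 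Justifying this It\^o formula for the mild solution --- which requires either controlling the Yosida error in the strong $B$-condition or a direct argument exploiting that $B$ was chosen precisely so that \ref{assumptionBcondition} holds --- is the main technical obstacle; the algebraic estimates themselves are elementary.

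For the first estimate I apply It\^o's formula to $s\mapsto \|\xi_s\|^2_{H_{-1}} = \langle B\xi_s, \xi_s\rangle_H$. The drift has three terms. The decisive one, $2\langle B\xi_s, A\xi_s\rangle_H = 2\langle A^{\ast} B\xi_s, \xi_s\rangle_H$, is controlled by the strong $B$-condition: since $-A^{\ast} B + c_0 B \geq I$, we get $\langle A^{\ast} B\xi_s, \xi_s\rangle_H \leq c_0\|\xi_s\|^2_{H_{-1}} - \|\xi_s\|^2_H$, producing a favorable $-2\|\xi_s\|^2_H$. The diffusion contributes $\mathrm{tr}(\Delta\sigma_s^{\ast} B\Delta\sigma_s) \leq \|B\|\,\|\Delta\sigma_s\|^2_{L_2(\Xi,H)} \leq C\|\xi_s\|^2_{H_{-1}}$, where the crucial point is that \ref{assumptionsigma} bounds $\sigma$ Lipschitz in the $H_{-1}$-norm, so this term feeds back only into $\|\xi_s\|^2_{H_{-1}}$. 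The drift difference is handled by Young's inequality, $2\langle B\xi_s, \Delta b_s\rangle_H \leq \|\xi_s\|^2_H + C\|\xi_s\|^2_{H_{-1}}$, with the $\|\xi_s\|^2_H$ absorbed into the favorable term. Taking expectations (the stochastic integral being a martingale) gives
\begin{equation}
	\mathbb{E}\|\xi_s\|^2_{H_{-1}} + \int_t^s \mathbb{E}\|\xi_r\|^2_H\,\mathrm{d}r \leq \|x-x^{\prime}\|^2_{H_{-1}} + C\int_t^s \mathbb{E}\|\xi_r\|^2_{H_{-1}}\,\mathrm{d}r,
\end{equation}
and Gr\"onwall's inequality yields $\mathbb{E}\|\xi_s\|^2_{H_{-1}} \leq e^{CT}\|x-x^{\prime}\|^2_{H_{-1}}$; reinserting this bound controls the $H$-integral as well. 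All constants depend only on $T$, $c_0$, $\|B\|$ and the Lipschitz constants, hence are independent of $t$.

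For the second, smoothing estimate I apply It\^o's formula to the time-weighted functional $s\mapsto (s-t)\|\xi_s\|^2_H$. Here I use the plain $H$-norm together with maximal dissipativity \ref{assumptionAsemigroup}, which gives $\langle A\xi_s, \xi_s\rangle_H \leq 0$; the weight $(s-t)$ vanishes at $s=t$ and thereby eliminates the uncontrollable initial term $\|x-x^{\prime}\|^2_H$. Integrating over $[t,T]$, taking expectations, and discarding the $A$-term by dissipativity,
\begin{equation}
	(T-t)\,\mathbb{E}\|\xi_T\|^2_H \leq \int_t^T \mathbb{E}\|\xi_s\|^2_H\,\mathrm{d}s + \int_t^T (s-t)\,\mathbb{E}\bigl[2\langle\xi_s,\Delta b_s\rangle_H + \|\Delta\sigma_s\|^2_{L_2(\Xi,H)}\bigr]\,\mathrm{d}s.
\end{equation}
Bounding $2\langle\xi_s,\Delta b_s\rangle_H \leq C\|\xi_s\|^2_H$ and $\|\Delta\sigma_s\|^2_{L_2(\Xi,H)} \leq C\|\xi_s\|^2_{H_{-1}}$, estimating $(s-t)\leq T$, and invoking the first estimate to bound both $\int_t^T\mathbb{E}\|\xi_s\|^2_H\,\mathrm{d}s$ and $\int_t^T\mathbb{E}\|\xi_s\|^2_{H_{-1}}\,\mathrm{d}s$ by a constant times $\|x-x^{\prime}\|^2_{H_{-1}}$, I obtain $(T-t)\,\mathbb{E}\|\xi_T\|^2_H \leq C\|x-x^{\prime}\|^2_{H_{-1}}$, which is the claimed bound after dividing by $T-t$.
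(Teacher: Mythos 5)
Your argument is correct, but note that the paper itself does not prove this lemma: it simply recalls it from \cite[Lemma~3.23]{fabbri2017}, and what you have written is essentially the proof given there. The two key mechanisms are exactly right: for the $H_{-1}$ estimate, computing $\mathrm{d}\langle B\xi_s,\xi_s\rangle_H$ and using $-A^{\ast}B+c_0B\geq I$ to generate the good term $-2\|\xi_s\|_H^2$ that both absorbs the Young term from $\Delta b$ and produces the $\int_t^T\mathbb{E}\|\xi_s\|_H^2\,\mathrm{d}s$ bound, with the $H_{-1}$-Lipschitz hypothesis \ref{assumptionsigma} ensuring the trace term closes the Gr\"onwall loop in the weak norm; for the smoothing estimate, the weight $(s-t)$ killing the (uncontrollable) initial $H$-norm and dissipativity discarding $\langle A\xi_s,\xi_s\rangle_H$, after which the first estimate finishes the job. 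The one place your write-up is not self-contained is the one you flag yourself: the rigorous justification of It\^o's formula for the mild solution. Passing through Yosida approximants $A_n$ requires knowing that the strong $B$-condition survives the approximation uniformly in $n$ (dissipativity of $A_n$ is immediate, but $-A_n^{\ast}B+c_0B\geq cI$ for $n$ large is not, and is precisely the technical content of the argument in \cite{fabbri2017}); since you identify this as the missing step rather than glossing over it, I would count the proposal as a correct reconstruction of the standard proof modulo that quotable approximation lemma.
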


For the proof of our main result, we need to extend the processes $(X^{t,x}_s)_{s\in [t,T]}$ and $(Y^{t,x}_s,Z^{t,x}_s)_{s\in [t,T]}$ to the entire interval $[0,T]$. For $s\in [0,t)$, we set $X^{t,x}_s := x$. Now, that $X^{t,x}$ is defined on the entire interval $[0,T]$, we can solve the BSDE \eqref{bsde} on the entire interval.

\begin{lemma}\label{apriori2}
	Let Assumptions \ref{assumptionA} and \ref{assumptionB} be satisfied. Then, for every $x\in H$, it holds
	\begin{equation}
		\mathbb{E} \left [ \sup_{s\in [0,T]} \| X^{t^{\prime},x}_s - X^{t,x}_s \|_{H}^2 \right ] \to 0
	\end{equation}
	as $t^{\prime} \to t$.
\end{lemma}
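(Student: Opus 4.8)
The plan is to work with the mild formulation of \eqref{forwardequation} and compare the two mild solutions directly, closing the resulting estimate with Gronwall's inequality. Recall that under Assumptions \ref{assumptionA} and \ref{assumptionB} the solution satisfies, for $s\in[t,T]$,
\[
	X^{t,x}_s = S(s-t)x + \int_t^s S(s-r)b(r,X^{t,x}_r)\,\mathrm{d}r + \int_t^s S(s-r)\sigma(r,X^{t,x}_r)\,\mathrm{d}W_r,
\]
and that, since $A$ is maximal dissipative, $(S(\tau))_{\tau\geq0}$ is a contraction semigroup, so $\|S(\tau)\|_{L(H)}\leq1$ for all $\tau\geq0$. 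Fix $t'>t$ and $s\in[t',T]$. Using the semigroup law $S(s-r)=S(s-t')S(t'-r)$ for $r\leq t'$, I would decompose $X^{t',x}_s-X^{t,x}_s$ into five terms: the semigroup increment $S(s-t')[I-S(t'-t)]x$; the drift comparison $\int_{t'}^s S(s-r)[b(r,X^{t',x}_r)-b(r,X^{t,x}_r)]\,\mathrm{d}r$; the initial-slab drift $-S(s-t')\int_t^{t'}S(t'-r)b(r,X^{t,x}_r)\,\mathrm{d}r$; and the two analogous diffusion terms.

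The key observation is that in the semigroup-increment and in both initial-slab terms the entire $s$-dependence sits in the prefactor $S(s-t')$, so after using the contraction bound $\|S(s-t')\|_{L(H)}\leq1$ each is controlled uniformly in $s$ with no maximal inequality required. The semigroup increment is bounded by $\|[I-S(t'-t)]x\|_H$, which vanishes as $t'\downarrow t$ by strong continuity of the semigroup. The initial-slab drift is bounded by $\int_t^{t'}\|b(r,X^{t,x}_r)\|_H\,\mathrm{d}r$, while the initial-slab diffusion reduces to $\big\|\int_t^{t'}S(t'-r)\sigma(r,X^{t,x}_r)\,\mathrm{d}W_r\big\|_H$, whose second moment is controlled through the It\^o isometry by $\mathbb{E}\int_t^{t'}\|\sigma(r,X^{t,x}_r)\|_{L_2(\Xi,H)}^2\,\mathrm{d}r$. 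Both tend to $0$ as $t'\downarrow t$ by the linear-growth bounds of Assumption \ref{assumptionB} together with the standard moment bound $\mathbb{E}\sup_{s\in[t,T]}\|X^{t,x}_s\|_H^2<\infty$.

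It remains to handle the two comparison terms on $[t',s]$. Writing $\phi(u):=\mathbb{E}\sup_{s\in[t',u]}\|X^{t',x}_s-X^{t,x}_s\|_H^2$, the drift comparison contributes, by Cauchy-Schwarz and the Lipschitz bound on $b$, a quantity bounded by $C\int_{t'}^u\phi(r)\,\mathrm{d}r$. For the diffusion comparison $\int_{t'}^s S(s-r)[\sigma(r,X^{t',x}_r)-\sigma(r,X^{t,x}_r)]\,\mathrm{d}W_r$ I would invoke a maximal inequality for stochastic convolutions driven by a contraction semigroup to bound $\mathbb{E}\sup_{s\in[t',u]}$ of its squared norm by $C\,\mathbb{E}\int_{t'}^u\|\sigma(r,X^{t',x}_r)-\sigma(r,X^{t,x}_r)\|_{L_2(\Xi,H)}^2\,\mathrm{d}r$; the Lipschitz bound of Assumption \ref{assumptionsigma} together with the embedding $\|\cdot\|_{H_{-1}}\leq\|B\|_{L(H)}^{1/2}\|\cdot\|_H$ turns this into $C\int_{t'}^u\phi(r)\,\mathrm{d}r$ as well. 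Collecting all terms yields $\phi(u)\leq C\varepsilon(t',t)+C\int_{t'}^u\phi(r)\,\mathrm{d}r$ with $\varepsilon(t',t)\to0$ as $t'\downarrow t$, and Gronwall's inequality gives $\phi(T)\leq C\varepsilon(t',t)e^{C(T-t)}\to0$, which is precisely the claim.

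The main obstacle is the diffusion comparison term: the stochastic convolution $s\mapsto\int_{t'}^s S(s-r)[\sigma(r,X^{t',x}_r)-\sigma(r,X^{t,x}_r)]\,\mathrm{d}W_r$ is not a martingale in $s$, so the Burkholder-Davis-Gundy inequality does not apply directly and a genuine maximal inequality for stochastic convolutions is needed. The contraction property of $(S(\tau))_{\tau\geq0}$, which follows from the maximal dissipativity in Assumption \ref{assumptionAsemigroup}, is exactly what makes such an inequality available. I would also flag the minor point that, although $\sigma$ is only Lipschitz in the weaker norm $\|\cdot\|_{H_{-1}}$, the continuous embedding $H\hookrightarrow H_{-1}$ lets one close the estimate directly in the $H$-norm, so that (unlike in Lemma \ref{apriori1}, where the initial data differ) no refined $H_{-1}$-argument is required here.
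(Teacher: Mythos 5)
Your argument is correct, but it does not match what the paper does: the paper gives no proof of this lemma at all and instead cites \cite[Proposition 3.3]{fuhrman2002}, remarking that the result proved there is in fact stronger (continuity of $(t,x)\mapsto X^{t,x}$) than the special case needed here. Your proposal is essentially the standard direct argument that such a citation stands in for: compare the mild formulations, isolate the initial-slab contributions on $[t,t']$ and the semigroup increment $S(s-t')[I-S(t'-t)]x$, and close the comparison terms on $[t',s]$ by Gr\"onwall. Two points in your write-up are exactly the ones that make the supremum-in-$s$ version (as opposed to a pointwise second-moment bound) go through, and you identify both correctly: factoring $S(s-t')$ out of the initial-slab terms via the semigroup law so that the contraction bound $\|S(s-t')\|_{L(H)}\leq 1$ (Lumer--Phillips, from maximal dissipativity) removes the $s$-dependence there without any maximal inequality, and invoking a genuine maximal inequality for stochastic convolutions driven by a contraction semigroup for the diffusion comparison term, since that convolution is not a martingale in $s$ and Burkholder--Davis--Gundy does not apply directly. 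Such an inequality is available in the contraction-semigroup setting (and is recorded in \cite{fabbri2017}, which the paper already relies on), so the only thing your proof needs beyond what you wrote is an explicit reference for it. Your observation that the $H_{-1}$-Lipschitz bound on $\sigma$ can be absorbed into the $H$-norm via the continuous embedding $H\hookrightarrow H_{-1}$ is also right, and explains why no $H_{-1}$-bookkeeping of the kind needed in Lemma \ref{apriori1} is required here. In short: the paper buys brevity by outsourcing the lemma; your version buys a self-contained proof at the cost of one citable maximal inequality.
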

\begin{remark}
	In \cite[Proposition 3.3]{fuhrman2002}, the authors actually prove a stronger result than this. However, we only need the stated result.
\end{remark}
Now, let us state and prove our main result.

\begin{theorem}\label{viscosityproperty}
	Let Assumption \ref{assumptionA}, \ref{assumptionB} and \ref{assumptionC} be satisfied. Let $(Y,Z)$ be the solution of the BSDE \eqref{bsde}, and define $u(t,x) := Y^{t,x}_t$. Then $u$ is a $B$-continuous viscosity solution of the PDE \eqref{PDE} in the sense of Definition \ref{definitionviscositysolution}.
\end{theorem}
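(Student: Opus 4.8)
The plan is to show, in turn, that $u$ is a sufficiently regular candidate, that it satisfies a dynamic programming identity, and finally that it verifies both the sub- and the supersolution inequalities of Definition~\ref{definitionviscositysolution}, the last step by contradiction via the comparison results of Section~\ref{BSDEs}. The first task is to establish the continuity demanded by Definition~\ref{definitionviscositysolution}. Applying Lemma~\ref{BSDEdependence} to the two BSDEs with data $\big(f(\cdot,X^{t,x}_\cdot,\cdot,\cdot),g(X^{t,x}_T)\big)$ and $\big(f(\cdot,X^{t,x'}_\cdot,\cdot,\cdot),g(X^{t,x'}_T)\big)$ and using the Lipschitz bounds from Assumption~\ref{assumptionC}, the difference $|u(t,x)-u(t,x')|^2$ is controlled by $\mathbb{E}\big[\|X^{t,x}_T-X^{t,x'}_T\|_H^2+(\int_t^T\|X^{t,x}_s-X^{t,x'}_s\|_H\,\mathrm{d}s)^2\big]$; inserting the smoothing estimate $\mathbb{E}[\|X^{t,x}_T-X^{t,x'}_T\|_H^2]\le \tfrac{C}{T-t}\|x-x'\|_{H_{-1}}^2$ and the integrated bound of Lemma~\ref{apriori1} yields local Lipschitz continuity of $u(t,\cdot)$ in the $H_{-1}$-norm, uniformly for $t\in[0,T-\delta]$. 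This is exactly the estimate delivering $B$-continuity in the space variable; continuity in time follows from Lemma~\ref{apriori2} together with Lemma~\ref{BSDEdependence}, and the terminal condition $u(T,x)=g(x)$ holds by definition since $X^{T,x}_T=x$. Hence $u$ is locally bounded, jointly continuous and $B$-continuous, qualifying simultaneously as a candidate sub- and supersolution.

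Next I would prove the flow identity $Y^{t,x}_s=u(s,X^{t,x}_s)$ for $s\in[t,T]$. This follows from the flow property $X^{t,x}_r=X^{s,X^{t,x}_s}_r$ for $r\ge s$ of the mild solution of \eqref{forwardequation}, which makes the BSDE \eqref{bsde} on $[s,T]$ driven by $X^{t,x}$ coincide with the one driven by $X^{s,X^{t,x}_s}$; uniqueness of the BSDE solution then gives $Y^{t,x}_s=Y^{s,X^{t,x}_s}_s$, and since $Y^{s,\cdot}_s$ is deterministic (being independent of $\mathcal{F}_s$) this equals $u(s,X^{t,x}_s)$. The continuity of $u$ from the previous step is what legitimises substituting the random argument $X^{t,x}_s$.

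For the subsolution property I would argue by contradiction. Let $\psi=\varphi+h(t,\|x\|_H)$ be a test function such that $u-\psi$ has a local maximum at $(t_0,x_0)\in(0,T)\times H$; after normalising, $u\le\psi$ near $(t_0,x_0)$ and $u(t_0,x_0)=\psi(t_0,x_0)$. Applying the It\^o formula for mild solutions to $\psi(s,X^{t_0,x_0}_s)$ produces a pair $(\tilde Y_s,\tilde Z_s):=(\psi(s,X_s),\sigma^\ast D\psi(s,X_s))$ with
\[
	\mathrm{d}\tilde Y_s=\mathcal{L}\psi(s,X_s)\,\mathrm{d}s+\langle\tilde Z_s,\mathrm{d}W_s\rangle_\Xi,
\]
where $\mathcal{L}\psi=\psi_t+\langle x,A^\ast D\varphi\rangle_H+\langle b,D\psi\rangle_H+\tfrac12\mathrm{tr}(\sigma^\ast D^2\psi\,\sigma)$. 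If the subsolution inequality failed at $(t_0,x_0)$, then $\mathcal{L}\psi(t_0,x_0)<f(t_0,x_0,u(t_0,x_0),\tilde Z_{t_0})$, and by continuity the strict inequality $\mathcal{L}\psi(s,X_s)<f(s,X_s,\tilde Y_s,\tilde Z_s)$ persists for $s$ up to a stopping time $\rho>t_0$ at which $X$ first leaves the neighbourhood. On $[t_0,\rho]$ the pair $(\tilde Y,\tilde Z)$ is then a supersolution of the BSDE with driver $f(s,X_s,\cdot,\cdot)$ whose terminal value $\tilde Y_\rho=\psi(\rho,X_\rho)$ dominates $Y_\rho=u(\rho,X_\rho)$, while the true solution $Y=u(\cdot,X_\cdot)$ plays the role of the subsolution. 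The strict comparison of Proposition~\ref{comparison} (equivalently, the explicit representation for the linearised difference via Proposition~\ref{comparisonlinear}, whose strictly positive source is $f(s,X_s,\tilde Y_s,\tilde Z_s)-\mathcal{L}\psi(s,X_s)>0$) then forces $\tilde Y_{t_0}>Y_{t_0}$, contradicting $\tilde Y_{t_0}=\psi(t_0,x_0)=u(t_0,x_0)=Y_{t_0}$. The supersolution property is obtained symmetrically from a local minimum of $u+\psi$.

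The main obstacle is the rigorous application of It\^o's formula in the third step: since $x_0\notin\mathcal{D}(A)$ in general and $X$ is only a mild solution, the term $\langle Ax,D\varphi\rangle_H$ must be read as $\langle x,A^\ast D\varphi\rangle_H$, and the formula has to be justified through Yosida approximations of $A$ (this is precisely why the test functions in Definition~\ref{definitiontestfunction} require $A^\ast D\varphi$ to be well defined and uniformly continuous). A secondary difficulty is the localisation: because the maximum is only local and in infinite dimensions the $B$-topology is weaker than the norm topology, one must control the exit time $\rho$, ensure the strict inequality survives in the comparison, and justify the measurable substitution underlying the flow identity.
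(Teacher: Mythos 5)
Your proposal is correct and follows essentially the same route as the paper: $B$-continuity via Lemma \ref{BSDEdependence} combined with the forward estimates of Lemmas \ref{apriori1} and \ref{apriori2}, the flow identity $Y^{t,x}_s=u(s,X^{t,x}_s)$ from uniqueness, and the sub/supersolution inequalities by contradiction using the It\^o formula for test functions, a localizing stopping time, and the strict comparison of Proposition \ref{comparison}. The only cosmetic difference is that the paper handles the time-continuity term by writing out the BSDE difference over the mismatched horizons $[t_n,T]$ and $[t,T]$ explicitly rather than quoting Lemma \ref{BSDEdependence} directly, but this does not change the substance.
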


\begin{proof}
	Note that the terminal condition is satisfied by definition. In the first step, we are going to prove the $B$-continuity of $u$, and in the second step, we are going to prove the viscosity property.

	\emph{Step 1}: Let $(t,x) \in (0,T) \times H$. Let $t_n \to t$, $x_n\rightharpoonup x$ and $Bx_n\to Bx$ and fix $N\in\mathbb{N}$ such that $|t_n-t| < (T-t)/2$ for all $n\geq N$. We have
	\begin{equation}\label{triangleinequality}
		|u(t_n,x_n) -u(t,x)| = \left |Y^{t_n,x_n}_{t_n} - Y^{t,x}_t \right | \leq \left |Y^{t_n,x_n}_{t_n} - Y^{t_n,x}_{t_n}\right | + \left |Y^{t_n,x}_{t_n} - Y^{t,x}_t \right |.
	\end{equation}
	For the first term, choosing
	\begin{equation}
	\begin{cases}
		F^1(s,y,z) := f(s,X^{t_n,x_n}_s,y,z)\\
		\eta^1 := g(X^{t_n,x_n}_T)
	\end{cases}
	\end{equation}
	and
	\begin{equation}
	\begin{cases}
		F^2(s,y,z) := f(s,X^{t_n,x}_s,y,z)\\
		\eta^2 := g(X^{t_n,x}_T)
	\end{cases}
	\end{equation}
	and applying Lemma \ref{BSDEdependence} yields
	\begin{equation}
	\begin{split}
		&\left |Y^{t_n,x_n}_{t_n} - Y^{t_n,x}_{t_n} \right |^2\\
		 &\leq C \mathbb{E} \Bigg [ \left |g(X^{t_n,x_n}_T) - g(X^{t_n,x}_T)\right |^2\\
		 &\qquad\qquad + \left ( \int_{t_n}^T |f(s,X^{t_n,x_n}_s,Y^{t_n,x_n}_s,Z^{t_n,x_n}_s) - f(s,X^{t_n,x}_s,Y^{t_n,x_n}_s,Z^{t_n,x_n}_s)| \mathrm{d}s \right )^2 \Bigg ].
	\end{split}
	\end{equation}
	Using Assumption \ref{assumptionC}, we obtain
	\begin{equation}
		\left |Y^{t_n,x_n}_{t_n} - Y^{t_n,x}_{t_n} \right |^2 \leq C\mathbb{E} \left [\|X^{t_n,x_n}_T - X^{t_n,x}_T\|_{H}^2 + \int_{t_n}^T \|X^{t_n,x_n}_s - X^{t_n,x}_s\|_H^2 \mathrm{d}s \right ].
	\end{equation}
	Applying Lemma \ref{apriori1} yields
	\begin{equation}\label{step1bcontinuity}
		\left |Y^{t_n,x_n}_{t_n} - Y^{t_n,x}_{t_n} \right |^2 \leq C \| x_n - x \|_{H_{-1}}^2
	\end{equation}
	for some constant that is uniform in $n\geq N$.
	
	Now let us turn to the second term in \eqref{triangleinequality}. Let us first consider the case that $t_n \downarrow t$. Then we have
	\begin{equation}\label{secondterm}
		\begin{split}
		&\left |Y^{t_n,x}_{t_n} - Y^{t,x}_t \right |^2\\
		&\leq C \mathbb{E} \Bigg [ \left | g(X^{t_n,x}_T) - g(X^{t,x}_T) \right |^2 + \int_{t_n}^T |f(s,X^{t_n,x}_s,Y^{t_n,x}_s,Z^{t_n,x}_s) - f(s,X^{t,x}_s,Y^{t,x}_s,Z^{t,x}_s)|^2 \mathrm{d}s\\
		&\quad + \left | \int_{t_n}^T \langle Z^{t_n,x}_s - Z^{t,x}_s, \mathrm{d}W_s \rangle_{\Xi} \right |^2 + \int_{t}^{t_n} | f(s,X^{t,x}_s, Y^{t,x}_s,Z^{t,x}_s) |^2 \mathrm{d}s + \left | \int_{t}^{t_n} \langle Z^{t,x}_s ,\mathrm{d}W_s \rangle_{\Xi} \right |^2 \Bigg ].
		\end{split}
	\end{equation}
	The fourth and the fifth term tend to zero as $t_n \downarrow t$ due to the continuity of the integral. For the first term, we have
	\begin{equation}\label{lipschitzg}
		\left | g(X^{t_n,x}_T) - g(X^{t,x}_T) \right |^2 \leq C \| X^{t_n,x}_T - X^{t,x}_T \|^2_{H}.
	\end{equation}
	For the second term we have by Assumption \ref{assumptionf}
	\begin{equation}
	\begin{split}
		&\int_{t_n}^T |f(s,X^{t_n,x}_s,Y^{t_n,x}_s,Z^{t_n,x}_s) - f(s,X^{t,x}_s,Y^{t,x}_s,Z^{t,x}_s)|^2 \mathrm{d}s \\
		&\leq C \int_{t_n}^T \| X^{t_n,x}_s -  X^{t,x}_s \|^2_H + | Y^{t_n,x}_s-Y^{t,x}_s |^2 + \| Z^{t_n,x}_s - Z^{t,x}_s \|^2_{\Xi} \mathrm{d}s.
	\end{split}
	\end{equation}
	Therefore, applying Lemma \ref{BSDEdependence} and using \eqref{lipschitzg}, we obtain
	\begin{equation}
	\begin{split}
		&\mathbb{E} \left [ \int_{t_n}^T |f(s,X^{t_n,x}_s,Y^{t_n,x}_s,Z^{t_n,x}_s) - f(s,X^{t,x}_s,Y^{t,x}_s,Z^{t,x}_s)|^2 \mathrm{d}s \right ]\\ &\leq C \mathbb{E} \left [ \sup_{s\in [0,T]} \|X^{t_n,x}_s - X^{t,x}_s \|_{H}^2 \right ].
	\end{split}
	\end{equation}
	For the third term, we have by Lemma \ref{BSDEdependence}
	\begin{equation}
	\begin{split}
		\mathbb{E} \left [ \left | \int_{t_n}^T \langle Z^{t_n,x}_s - Z^{t,x}_s, \mathrm{d}W_s \rangle_{\Xi} \right |^2 \right ] &= \mathbb{E} \left [ \int_{t_n}^T \| Z^{t_n,x}_s - Z^{t,x}_s \|_{\Xi}^2 \mathrm{d}s \right ]\\
		&\leq C \mathbb{E} \left [ \sup_{s\in [0,T]} \|X^{t_n,x}_s - X^{t,x}_s \|_{H}^2 \right ].
	\end{split}
	\end{equation}
	Applying Lemma \ref{apriori2} yields that the right-hand side of \eqref{secondterm} tends to zero as $t_n\downarrow t$. Now, let $t_n\uparrow t$. In this case, \eqref{secondterm} takes the form
	\begin{equation}\label{secondterm_2}
		\begin{split}
			&\left |Y^{t_n,x}_{t_n} - Y^{t,x}_t \right |^2\\
			&\leq C \mathbb{E} \Bigg [ \left | g(X^{t_n,x}_T) - g(X^{t,x}_T) \right |^2 + \int_{t}^T |f(s,X^{t_n,x}_s,Y^{t_n,x}_s,Z^{t_n,x}_s) - f(s,X^{t,x}_s,Y^{t,x}_s,Z^{t,x}_s)|^2 \mathrm{d}s\\
			&\quad + \left | \int_{t}^T \langle Z^{t_n,x}_s - Z^{t,x}_s, \mathrm{d}W_s \rangle_{\Xi} \right |^2 + \int_{t_n}^{t} | f(s,X^{t_n,x}_s, Y^{t_n,x}_s,Z^{t_n,x}_s) |^2 \mathrm{d}s + \left | \int_{t_n}^{t} \langle Z^{t_n,x}_s ,\mathrm{d}W_s \rangle_{\Xi} \right |^2 \Bigg ].
		\end{split}
	\end{equation}
	The first three terms can be treated with similar arguments as before. For the fourth term, we have
	\begin{equation}\label{newestimate}
	\begin{split}
		&\mathbb{E} \left [ \int_{t_n}^{t} | f(s,X^{t_n,x}_s, Y^{t_n,x}_s,Z^{t_n,x}_s) |^2 \mathrm{d}s \right ]\\
		&\leq C \int_{t_n}^t | f(s,0,0,0) |^2 \mathrm{d}s + C \mathbb{E} \left [ \int_{t_n}^t \| X_s^{t_n,x} \|_H^2 + |Y^{t_n,x}_s|^2 + \| Z^{t_n,x}_s \|_{\Xi}^2 \mathrm{d}s \right ].
	\end{split}
	\end{equation}
	Due to Assumption \ref{assumptionf}, the first term converges to zero as $t_n\uparrow t$. Considering the term involving $X^{t_n,x}$ in the second integral, we have
	\begin{equation}
	\begin{split}
		\mathbb{E} \left [ \int_{t_n}^t \| X^{t_n,x}_s \|_H^2 \mathrm{d}s \right ] &\leq C \mathbb{E} \left [ \int_{t_n}^t \| X^{t_n,x}_s - X^{t,x}_s \|_H^2 + \| X^{t,x}_s \|_H^2 \mathrm{d}s \right ]\\
		&\leq C \mathbb{E} \left [ \sup_{s\in [0,T]} \| X^{t_n,x}_s - X^{t,x}_s \|_H^2 \right ] + C \mathbb{E} \left [ \int_{t_n}^t \| X^{t,x}_s \|_H^2 \mathrm{d}s \right ]
	\end{split}
	\end{equation}
	where the first term on the right-hand side tends to zero as $t_n \uparrow t$ due to Lemma \ref{apriori2} and the second term tends to zero since the integrand is integrable over $[0,T]$. For the terms involving $Y^{t_n,x}$ and $Z^{t_n,x}$ in the second integral in \eqref{newestimate}, we have
	\begin{equation}\label{newestimate2}
	\begin{split}
		&\mathbb{E} \left [ \int_{t_n}^t |Y^{t_n,x}_s|^2 + \| Z^{t_n,x}_s \|_{\Xi}^2 \mathrm{d}s \right ] \\
		&\leq C\mathbb{E} \left [ \int_{0}^T |Y^{t_n,x}_s - Y^{t,x}_s |^2 + \| Z^{t_n,x}_s - Z^{t,x}_s \|_{\Xi}^2 \mathrm{d}s \right ] + \mathbb{E} \left [ \int_{t_n}^t |Y^{t,x}_s|^2 + \| Z^{t,x}_s \|_{\Xi}^2 \mathrm{d}s \right ].
	\end{split}
	\end{equation}
	Applying Lemma \ref{BSDEdependence} and Assumption \ref{assumptionC}, we obtain for the first term
	\begin{equation}
		\mathbb{E} \left [ \int_{0}^T |Y^{t_n,x}_s - Y^{t,x}_s |^2 + \| Z^{t_n,x}_s - Z^{t,x}_s \|_{\Xi}^2 \mathrm{d}s \right ]
		\leq C \mathbb{E} \left [ \sup_{s\in [0,T]} \| X^{t_n,x}_s - X^{t,x}_s \|_H^2 \right ],
	\end{equation}
	which tends to zero as $t_n \uparrow t$ due to Lemma \ref{apriori2}. The second term on the right-hand side of \eqref{newestimate2} tends to zero as $t_n\uparrow t$ since the integrand is integrable over $[0,T]$. Together with \eqref{step1bcontinuity}, this concludes the proof of the $B$-continuity.
	
	\emph{Step 2}: First, we show that $u$ is a viscosity subsolution. Let $\psi= \varphi+h$ be a test function as in Definition \ref{definitiontestfunction} such that
		\begin{equation}\label{maximum}
			0=(u-\psi)(t,x) = \max_{\substack{s\in [0,T]\\x^{\prime}\in H}} (u-\psi)(s,x^{\prime})
		\end{equation}
		We can assume without loss of generality that the maximum is strict, see \cite[Lemma 3.37]{fabbri2017}. Assume, for sake of contradiction, that
		\begin{multline}
			\psi_t(t,x) + \langle x, A^{\ast} D\varphi(t,x)\rangle_H + \langle b(t,x),D\psi(t,x) \rangle_H\\
			 + \frac12 \text{tr}(\sigma^{\ast}(t,x) D^2\psi(t,x) \sigma(t,x)) - f(t,x,u(t,x), \sigma^{\ast} D\psi(t,x)) <0.
		\end{multline}
		Fix $\varepsilon>0$ such that for all $t\leq s\leq t+\varepsilon$ and $\|x^{\prime}-x\|_H\leq \varepsilon$, it holds
		\begin{multline}\label{inequality}
			\psi_t(s,x^{\prime}) + \langle x^{\prime}, A^{\ast} D\varphi(s,x^{\prime})\rangle_H + \langle b(s,x^{\prime}),D\psi(s,x^{\prime}) \rangle_H\\
			+ \frac12 \text{tr}(\sigma^{\ast}(s,x^{\prime}) D^2\psi(s,x^{\prime}) \sigma(s,x^{\prime})) - f(s,x^{\prime},u(s,x^{\prime}),\sigma^{\ast} D\psi (s,x^{\prime})) < 0.
		\end{multline}
		Now, we define the stopping time
		\begin{equation}
			\tau := \inf \{ s\geq t | \|X^{t,x}_s -x\|_H \geq \varepsilon \} \wedge (t+\varepsilon).
		\end{equation}
		Let $(Y^{t,x},Z^{t,x})$ be the solution of the BSDE \eqref{bsde}, and define for $t\leq s \leq t+\varepsilon$
		\begin{equation}
			(Y^1_s, Z^1_s) := (Y^{t,x}_{s\wedge \tau} , \mathbf{1}_{[0,\tau]} (s) Z^{t,x}_{s}).
		\end{equation}
		Then we have
		\begin{equation}
			u(t+h,X^{t,x}_{t+h}) = Y^{t+h,X^{t,x}_{t+h}}_{t+h} = Y^{t,x}_{t+h}
		\end{equation}
		due to the uniqueness of the solution of the SPDE \eqref{forwardequation} and the uniqueness of the solution of the BSDE \eqref{bsde}. Therefore, for every $r\in [t,t+\varepsilon]$
		\begin{equation}
			Y^{t,x}_{r\wedge\tau} = u(r\wedge\tau,X^{t,x}_{r\wedge\tau}),
		\end{equation}
		which shows that $(Y^1,Z^1)$ solves the BSDE
		\begin{equation}\label{bsdeybar}
			\begin{cases}
				\mathrm{d}Y^1_s = \mathbf{1}_{[0,\tau]}(s) f(s,X^{t,x}_s, u(s,X^{t,x}_s) , Z^1_s) \mathrm{d}s + \langle Z^1_s, \mathrm{d}W_s\rangle_{\Xi} \\
				Y^1_{\tau} = u(\tau,X^{t,x}_{\tau}).
			\end{cases}
		\end{equation}
		On the other hand, we obtain from It\^o's formula for test functions (see \cite[Proposition 1.166]{fabbri2017}) that for every $r,s\in [t,t+\varepsilon]$, $r\geq s$,
		\begin{equation}
			\begin{split}
				&\psi(r\wedge \tau,X^{t,x}_{r\wedge\tau})\\
				&\leq \psi(s\wedge \tau,X^{t,x}_{s\wedge \tau}) + \int_s^{r} \mathbf{1}_{[0,\tau]}(s^{\prime}) \left ( \psi_t(s^{\prime},X^{t,x}_{s^{\prime}}) + \langle X^{t,x}_{s^{\prime}} , A^{\ast} D\varphi(s^{\prime},X^{t,x}_{s^{\prime}}) \rangle_H \right ) \mathrm{d}s^{\prime}\\
				&\quad + \int_s^{r} \mathbf{1}_{[0,\tau]}(s^{\prime}) \langle b(s^{\prime},X^{t,x}_{s^{\prime}}) , D\psi(s^{\prime},X^{t,x}_{s^{\prime}}) \rangle_H \mathrm{d}s^{\prime}\\
				&\quad + \frac12 \int_s^{r} \mathbf{1}_{[0,\tau]}(s^{\prime}) \text{tr}\left (\sigma^{\ast}(s^{\prime},X^{t,x}_{s^{\prime}}) D^2 \psi(s^{\prime},X^{t,x}_{s^{\prime}}) \sigma(s^{\prime},X^{t,x}_{s^{\prime}})\right ) \mathrm{d}s^{\prime}\\
				&\quad + \int_s^{r} \langle  \mathbf{1}_{[0,\tau]}(s^{\prime}) \sigma^{\ast}(s^{\prime},X^{t,x}_{s^{\prime}}) D\psi(s^{\prime},X^{t,x}_{s^{\prime}}), \mathrm{d}W_{s^{\prime}}\rangle_{\Xi} .
			\end{split}
		\end{equation}
		Therefore,
		\begin{equation}
			( Y^2_s, Z^2_s) := (\psi(s\wedge \tau,X^{t,x}_{s\wedge \tau}), \mathbf{1}_{[0,\tau]}(s) \sigma^{\ast}D\psi(s,X^{t,x}_s) )
		\end{equation}
		is a supersolution of the BSDE
		\begin{equation}\label{bsdeytilde}
			\begin{cases}
				\mathrm{d} Y^2_s = \mathbf{1}_{[0,\tau]}(s) \Bigl [ \psi_t(s,X^{t,x}_s) + \langle X^{t,x}_s , A^{\ast} D\varphi(s,X^{t,x}_s) \rangle_H + \langle b(s,X^{t,x}_s) , D\psi(s,X^{t,x}_s) \rangle_H\\
				\qquad\qquad\qquad\qquad\qquad + \frac12 \text{tr}(\sigma^{\ast}(s,X^{t,x}_s) D^2 \psi(s,X^{t,x}_s) \sigma(s,X^{t,x}_s)) \Bigr ] \mathrm{d}s + \langle Z^2_s, \mathrm{d}W_s\rangle_{\Xi} \\
				Y^2_{\tau} = \psi(\tau,X^{t,x}_{\tau}).
			\end{cases}
		\end{equation}
		Due to inequality \eqref{inequality} and the definition of $\tau$, we can apply \Cref{comparison} to obtain
		\begin{equation}
			\psi(t,x) = Y^2_t > Y^1_t = Y^{t,x}_t = u(t,x)
		\end{equation}
		which contradicts \eqref{maximum} and therefore concludes the proof that $u$ is a $B$-continuous viscosity subsolution. The proof that $u$ is a $B$-continuous viscosity supersolution is similar.
\end{proof}

\subsection{Uniqueness}

In order to prove uniqueness, we rely on a comparison theorem for the PDE \eqref{PDE} which is proved using analytic methods. For this result to hold, we have to impose additional assumptions on the coefficients of the PDE.

\begin{assumption}\label{assumptionD}
	\begin{enumerate}[label=(C\arabic*)]
		\item Let $\sigma$ be uniformly continuous on bounded subsets of $(0,T) \times H$.
		\item For $y^{\prime} \geq y$, let
		\begin{equation}
			f(s,x,y^{\prime},z) - f(s,x,y,z) \geq 0
		\end{equation}
		for all $(s,x,z)\in (0,T)\times H\times \Xi$.
	\end{enumerate}
\end{assumption}

\begin{theorem}
	Let Assumptions \ref{assumptionA}, \ref{assumptionB}, \ref{assumptionC} and \ref{assumptionD} be satisfied. Then $u(t,x) := Y^{t,x}_t$ is the unique $B$-continuous viscosity solutions satisfying
	\begin{equation}\label{terminalcondition}
		\lim_{t\to T} | u(t,x) - g(S(T-t) x) | = 0
	\end{equation}
	uniformly on bounded subsets of $H$ and
	\begin{equation}\label{ugrowth}
		|u(t,x)| \leq C_1 \exp \left ( C_2 \left ( \ln \left ( 1+\|x\|_H \right ) \right )^2 \right )
	\end{equation}
	for some constants $C_1,C_2\geq 0$ and all $(t,x)\in (0,T)\times H$.
\end{theorem}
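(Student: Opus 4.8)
The plan is to split the statement into existence and uniqueness. Existence is already in hand: by Theorem \ref{viscosityproperty}, $u(t,x)=Y^{t,x}_t$ is a $B$-continuous viscosity solution of \eqref{PDE}. It therefore remains to check that $u$ belongs to the class of functions in which comparison holds, that is, that it satisfies the terminal condition \eqref{terminalcondition} and the growth bound \eqref{ugrowth}, and then to invoke a comparison principle for \eqref{PDE} to conclude that any other such solution must coincide with $u$.

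For the growth bound, I would apply the a priori estimate of Lemma \ref{BSDEaprioriestimate} to the BSDE \eqref{bsde}, where $\eta=g(X^{t,x}_T)$ and $F(s,\cdot,\cdot)=f(s,X^{t,x}_s,\cdot,\cdot)$. The linear growth of $g$ and $f$ from Assumptions \ref{assumptiong} and \ref{assumptionf}, combined with the standard second-moment bound $\mathbb{E}[\sup_{s\in[t,T]}\|X^{t,x}_s\|_H^2]\leq C(1+\|x\|_H^2)$ for the mild solution of \eqref{forwardequation}, gives $|u(t,x)|^2=|Y^{t,x}_t|^2\leq\mathbb{E}[\sup_{s}|Y^{t,x}_s|^2]\leq C(1+\|x\|_H^2)$, so that $u$ grows at most linearly. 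Since a linear bound is dominated by the right-hand side of \eqref{ugrowth} for suitable $C_1,C_2$, the growth condition follows at once.

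For the terminal condition, I would use that $Y^{t,x}_t$ is deterministic ($\mathcal{F}_t$ being trivial) and take expectations in \eqref{bsde} to obtain
\[
u(t,x)=\mathbb{E}[g(X^{t,x}_T)]-\mathbb{E}\Big[\int_t^T f(s,X^{t,x}_s,Y^{t,x}_s,Z^{t,x}_s)\,\mathrm{d}s\Big].
\]
Writing $|u(t,x)-g(S(T-t)x)|$ as a sum of the two resulting differences, the first is controlled by the Lipschitz continuity of $g$ together with the mild representation of \eqref{forwardequation}, which shows that $X^{t,x}_T-S(T-t)x$ is made up only of drift and stochastic integrals over $[t,T]$ with $L^2$-norm of order $\sqrt{T-t}$; the second is controlled by Cauchy--Schwarz, the Lipschitz and growth properties of $f$, the a priori bounds on $(Y^{t,x},Z^{t,x})$ from Lemma \ref{BSDEaprioriestimate}, and the absolute continuity of $s\mapsto\int|f(s,0,0,0)|^2\,\mathrm{d}s$. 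Both quantities vanish as $t\to T$, uniformly on bounded subsets of $H$, which is \eqref{terminalcondition}.

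Finally, uniqueness follows from a comparison theorem for $B$-continuous viscosity solutions of the semilinear PDE \eqref{PDE}. The extra hypotheses in Assumption \ref{assumptionD}, namely uniform continuity of $\sigma$ on bounded sets and monotonicity (properness) of $f$ in the $y$-variable, are exactly what is needed to run the doubling-of-variables argument, which I would obtain by adapting the comparison theorem of \cite[Chapter 3]{fabbri2017} to our coefficients. Granting this, if $\tilde u$ is any other $B$-continuous viscosity solution obeying \eqref{terminalcondition} and \eqref{ugrowth}, then comparison applied to the pair $(u,\tilde u)$ and again to $(\tilde u,u)$ yields $u\leq\tilde u$ and $\tilde u\leq u$, hence $u=\tilde u$. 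The main obstacle is the comparison theorem itself: one must handle the unbounded operator $A$ through the strong $B$-condition of Assumption \ref{assumptionBcondition} inside the doubling argument, choose radial penalization terms compatible with the growth \eqref{ugrowth} to localize the maximization, and absorb the first-order term $f(\cdot,u,\sigma^\ast Dv)$ using its Lipschitz dependence on $(y,z)$ together with the uniform continuity of $\sigma$.
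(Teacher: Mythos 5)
Your proposal is correct and follows essentially the same route as the paper: existence comes from Theorem \ref{viscosityproperty}, the growth bound \eqref{ugrowth} from Lemma \ref{BSDEaprioriestimate} together with the standard second-moment estimate for \eqref{forwardequation}, the terminal condition \eqref{terminalcondition} from the BSDE representation combined with the mild formulation of $X^{t,x}_T - S(T-t)x$, It\^o's isometry and the Lipschitz/growth assumptions, and uniqueness by invoking the comparison theorem for $B$-continuous viscosity solutions (the paper cites \cite[Theorem 3.54]{fabbri2017} directly rather than re-deriving the doubling argument). The only cosmetic difference is that you take expectations first to eliminate the stochastic integral, whereas the paper keeps it inside the squared expression and applies It\^o's isometry; the resulting estimates are the same.
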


\begin{proof}
	Once we know that $u$ satisfies \eqref{terminalcondition} and \eqref{ugrowth}, the result follows from \cite[Theorem 3.54]{fabbri2017}. Let us first prove \eqref{terminalcondition}. Note that
	\begin{equation}\label{triangleinequality2}
	\begin{split}
		&| u(t,x) - g(S(T-t)x) |^2\\
		& = \mathbb{E} \left [ | Y^{t,x}_t - g(S(T-t)x) |^2 \right ]\\
		&= \mathbb{E} \left [ \left | g(X^{t,x}_T) - \int_t^T f(s,X^{t,x}_s,Y^{t,x}_s,Z^{t,x}_s) \mathrm{d}s - \int_t^T \langle Z^{t,x}_s, \mathrm{d}W_s\rangle_{\Xi} - g(S(T-t)x) \right |^2 \right ]\\
		&\leq \mathbb{E} \left [ \int_t^T | f(s,X^{t,x}_s,Y^{t,x}_s,Z^{t,x}_s) |^2 \mathrm{d}s + \left | \int_t^T \langle Z^{t,x}_s, \mathrm{d}W_s \rangle_{\Xi} \right |^2 \right ]\\
		&\quad + \mathbb{E} \left [ | g(X^{t,x}_T) - g(S(T-t)x) |^2 \right ] .
	\end{split}
	\end{equation}
	For the first term, using It\^o's isometry and Assumption \ref{assumptionC}, we have
	\begin{equation}
	\begin{split}
		& \mathbb{E} \left [ \int_t^T | f(s,X^{t,x}_s,Y^{t,x}_s,Z^{t,x}_s) |^2 \mathrm{d}s + \left | \int_t^T \langle Z^{t,x}_s, \mathrm{d}W_s\rangle_{\Xi} \right |^2 \right ]\\
		&\leq C \int_t^T |f(s,0,0,0)|^2 + \mathbb{E} \left [ \|X^{t,x}_s\|_H^2 + |Y^{t,x}_s|^2 + \|Z^{t,x}_s \|_{\Xi}^2 \right ] \mathrm{d}s.
	\end{split}
	\end{equation}
	Applying Lemma \ref{BSDEaprioriestimate}, we obtain
	\begin{equation}\label{firststep}
	\begin{split}
		& \mathbb{E} \left [ \int_t^T | f(s,X^{t,x}_s,Y^{t,x}_s,Z^{t,x}_s) |^2 \mathrm{d}s + \left | \int_t^T \langle Z^{t,x}_s ,\mathrm{d}W_s \rangle_{\Xi} \right |^2 \right ]\\
		&\leq C \int_t^T |f(s,0,0,0)|^2 + 1 + \mathbb{E} \left [ \|X^{t,x}_s\|_H^2 \right ] \mathrm{d}s.
	\end{split}
	\end{equation}
	For the last term in \eqref{triangleinequality2}, using Assumption \ref{assumptiong}, we have
	\begin{equation}
	\begin{split}
		&\mathbb{E} \left [ | g(X^{t,x}_T) - g(S(T-t)x) |^2 \right ]\\
		&\leq C \mathbb{E} \left [ \| X^{t,x}_T - S(T-t)x \|_H^2 \right ]\\
		&= C \mathbb{E} \left [ \left \| \int_t^T S(T-s) b(s,X^{t,x}_s) \mathrm{d}s + \int_t^T S(T-s) \sigma(s,X^{t,x}_s) \mathrm{d}W_s \right \|_H^2 \right ]\\
		&\leq C \mathbb{E} \left [ \int_t^T \| S(T-s) b(s,X^{t,x}_s) \|_H^2 \mathrm{d}s + \left \| \int_t^T S(T-s) \sigma(s,X^{t,x}_s) \mathrm{d}W_s \right \|_H^2 \right ].
	\end{split}
	\end{equation}
	Using again It\^o's isometry and the linear growth assumption on $b$ and $\sigma$, we obtain
	\begin{equation}
		\mathbb{E} \left [ | g(X^{t,x}_T) - g(S(T-t)x) |^2 \right ] \leq C \int_t^T 1 + \mathbb{E} \left [ \|X^{t,x}_s \|_H^2 \right ] \mathrm{d}s.
	\end{equation}
	Therefore, together with \eqref{firststep}, we have
	\begin{equation}
		| u(t,x) - g(S(T-t)x) |^2 \leq C \int_t^T |f(s,0,0,0)|^2 + 1 + \mathbb{E} \left [ \|X^{t,x}_s\|_H^2 \right ] \mathrm{d}s.
	\end{equation}
	Due to Assumption \ref{assumptionf} and well-known a priori estimates for the solution of the SPDE \eqref{forwardequation} (see e.g. \cite[Theorem 7.2]{daprato2014}), the right-hand side converges to zero as $t\to T$ uniformly on bounded subsets of $H$. The growth condition \eqref{ugrowth} follows from Lemma \ref{BSDEaprioriestimate} and the same a priori estimates for the solution of the SPDE \eqref{forwardequation}.
\end{proof}
\section*{Acknowledgments}
The author would like to thank Andrzej {\'{S}}wi{\k{e}}ch for his constructive criticism of the manuscript, as well as the anonymous referees for their feedback which helped to improve the manuscript. During the preparation of this work, the author was supported by a fellowship of the German Academic Exchange Service (DAAD).


\begin{thebibliography}{99}

	\bibitem{bally2005}
	\newblock V. Bally, E. Pardoux, L. Stoica,
	\newblock \emph{Backward stochastic differential equations associated to a symmetric {M}arkov process},
	\newblock Potential Anal., \textbf{22} (2005), 17--60.
	
	\bibitem{bandini2019}
	\newblock E. Bandini, F. Confortola, A. Cosso,
	\newblock \emph{B{SDE} representation and randomized dynamic programming principle for stochastic control problems of infinite-dimensional jump-diffusions},
	\newblock Electron. J. Probab., \textbf{24} (2019), Paper No. 81, 37 pp.
	
	\bibitem{barles1997}
	\newblock G. Barles, R. Buckdahn, E. Pardoux,
	\newblock \emph{Backward stochastic differential equations and integral-partial differential equations},
	\newblock Stochastics Stochastics Rep., \textbf{60} (1997), 57--83.
	
	\bibitem{beck2019}
	\newblock C. Beck, W. E, A. Jentzen,
	\newblock \emph{Machine learning approximation algorithms for high-dimensional fully nonlinear partial differential equations and second-order backward stochastic differential equations},
	\newblock J. Nonlinear Sci., \textbf{29} (2019), 1563--1619.
	
	\bibitem{briand2008}
	\newblock P. Briand, F. Confortola,
	\newblock \emph{BSDEs with stochastic Lipschitz condition and quadratic PDEs in Hilbert spaces},
	\newblock Stochastic Process. Appl., \textbf{118} (2008), 818--838.
	
	\bibitem{buckdahn2009}
	\newblock R. Buckdahn, J. Li, S. Peng,
	\newblock \emph{Mean-field backward stochastic differential equations and related partial differential equations},
	\newblock Stochastic Process. Appl., \textbf{119} (2009), 3133--3154.
	
	\bibitem{cerrai2022}
	\newblock S. Cerrai, G. Guatteri, G. Tessitore,
	\newblock \emph{Nonlinear random perturbations of {PDE}s and quasi-linear equations in {H}ilbert spaces depending on a small parameter},
	\newblock J. Funct. Anal., \textbf{286} (2024), 110418.
	
	\bibitem{cheridito2007}
	\newblock P. Cheridito, H. M. Soner, N. Touzi, N. Victoir,
	\newblock \emph{Second-order backward stochastic differential equations and fully nonlinear parabolic {PDE}s},
	\newblock Comm. Pure Appl. Math., \textbf{60} (2007), 1081--1110.
	
	
	\bibitem{daprato2014}
	\newblock G. Da Prato, J. Zabczyk,
	\newblock \emph{Stochastic equations in infinite dimensions},
	\newblock second ed., Cambridge University Press, Cambridge, 2014.
	
	\bibitem{elkaroui1997}
	\newblock N. El Karoui, C. Kapoudjian, E. Pardoux, S. Peng, M. C. Quenez,
	\newblock \emph{Reflected solutions of backward {SDE}'s, and related obstacle problems for {PDE}'s},
	\newblock Ann. Probab., \textbf{25} (1997), 702--737.

	\bibitem{elkaroui1997_2}
	\newblock N. El Karoui, S. Peng, M. C. Quenez,
	\newblock \emph{Backward Stochastic Differential Equations in Finance},
	\newblock \emph{Math. Finance}, \textbf{7} (1997), 1--71.
	
	\bibitem{fabbri2017}
	\newblock G. Fabbri, F. Gozzi, A. {\'{S}}wi{\k{e}}ch,
	\newblock \emph{Stochastic optimal control in infinite dimension},
	\newblock Springer, Cham, 2017.
	
	\bibitem{feng2018}
	\newblock C. Feng, X. Wang, H. Zhao,
	\newblock \emph{Quasi-linear {PDE}s and forward--backward stochastic differential equations: weak solutions},
	\newblock J. Differential Equations, \textbf{264} (2018), 959--1018.
	
	\bibitem{feynman1948}
	\newblock R. P. Feynman,
	\newblock \emph{Space-time approach to non-relativistic quantum mechanics},
	\newblock Rev. Modern Physics, \textbf{20} (1948), 367--387.
	
	\bibitem{fuhrman2010}
	\newblock M. Fuhrman, F. Masiero, G. Tessitore,
	\newblock \emph{Stochastic equations with delay: optimal control via {BSDE}s and regular solutions of {H}amilton-{J}acobi-{B}ellman equations},
	\newblock SIAM J. Control Optim., \textbf{48} (2010), 4624--4651.
	
	\bibitem{fuhrman2002}
	\newblock M. Fuhrman, G. Tessitore,
	\newblock \emph{Nonlinear {K}olmogorov equations in infinite dimensional spaces: the backward stochastic differential equations approach and applications to optimal control},
	\newblock Ann. Probab., \textbf{30} (2002), 1397--1465.
	
	\bibitem{fuhrman2004}
	\newblock M. Fuhrman, G. Tessitore,
	\newblock \emph{Infinite horizon backward stochastic differential equations and elliptic equations in {H}ilbert spaces},
	\newblock Ann. Probab., \textbf{32} (2004), 607--660.
	
	\bibitem{fuhrman2005}
	\newblock M. Fuhrman, G. Tessitore,
	\newblock \emph{Generalized directional gradients, backward stochastic differential equations and mild solutions of semilinear parabolic equations},
	\newblock App. Math. Optim., \textbf{51} (2005), 279--332.
	
	\bibitem{guatteri2020}
	\newblock G. Guatteri, G. Tessitore,
	\newblock \emph{Ergodic {BSDE}s with multiplicative and degenerate noise},
	\newblock SIAM J. Control Optim., \textbf{58} (2020), 2050--2077.
	
	\bibitem{han2018}
	\newblock J. Han, A. Jentzen, W. E,
	\newblock \emph{Solving high-dimensional partial differential equations using deep learning},
	\newblock Proc. Natl. Acad. Sci. USA, \textbf{115} (2018), 8505--8510.
	
	\bibitem{hu2015}
	\newblock Y. Hu, P.-Y. Madec, A. Richou,
	\newblock \emph{A probabilistic approach to large time behavior of mild solutions of {HJB} equations in infinite dimension},
	\newblock SIAM J. Control Optim., \textbf{53} (2015), 378--398.
	
	\bibitem{hure2020}
	\newblock C. Hur\'{e}, H. Pham, X. Warin,
	\newblock \emph{Deep backward schemes for high-dimensional nonlinear {PDE}s},
	\newblock Math. Comp., \textbf{89} (2020), 1547--1579.
	
	\bibitem{kac1949}
	\newblock M. Kac,
	\newblock \emph{On distributions of certain Wiener functionals},
	\newblock Trans. Amer. Math. Soc., \textbf{65} (1949), 1--13.
	
	\bibitem{karatzas1991}
	\newblock I. Karatzas, S. E. Shreve,
	\newblock \emph{Brownian motion and stochastic calculus},
	\newblock second ed., Springer, New York, 1991.
	
	\bibitem{kobylanski2000}
	\newblock M. Kobylanski,
	\newblock \emph{Backward stochastic differential equations and partial differential equations with quadratic growth},
	\newblock Ann. Probab., \textbf{28} (2000), 558--602.
	
	
	
	\bibitem{masiero2008}
	\newblock F. Masiero,
	\newblock \emph{Stochastic optimal control problems and parabolic equations in Banach spaces},
	\newblock SIAM J. Control Optim., \textbf{47} (2008), 251--300.
	
	\bibitem{masiero2021}
	\newblock F. Masiero, C. Orrieri, G. Tessitore, G. Zanco,
	\newblock \emph{Semilinear {K}olmogorov equations on the space of continuous functions via {BSDE}s},
	\newblock Stochastic Process. Appl., \textbf{136} (2021), 1--56.
	
	\bibitem{masiero2014}
	\newblock F. Masiero, A. Richou,
	\newblock \emph{H{JB} equations in infinite dimensions with locally {L}ipschitz {H}amiltonian and unbounded terminal condition},
	\newblock J. Differential Equations, \textbf{257} (2014), 1989--2034.
	
	\bibitem{pardoux1992}
	\newblock E. Pardoux, S. Peng,
	\newblock \emph{Backward stochastic differential equations and quasilinear parabolic partial differential equations},
	\newblock in: Stochastic partial differential equations and their applications ({C}harlotte, {NC}, 1991), Springer, Berlin, 1992, pp. 200--217.
	
	\bibitem{pardoux2014}
	\newblock E. Pardoux, A. R\u{a}\c{s}canu,
	\newblock \emph{Stochastic differential equations, backward {SDE}s, partial differential equations},
	\newblock Springer, Cham, 2014.
	
	\bibitem{pardoux1999}
	\newblock E. Pardoux, S. Tang,
	\newblock \emph{Forward-backward stochastic differential equations and quasilinear parabolic {PDE}s},
	\newblock Probab. Theory Related Fields, \textbf{114} (1999), 123--150.
	
	\bibitem{peng1999}
	\newblock S. Peng, Z. Wu,
	\newblock \emph{Fully coupled forward-backward stochastic differential equations and applications to optimal control},
	\newblock SIAM J. Control Optim., \textbf{37} (1999), 825--843.
	
	\bibitem{pham2009}
	\newblock H. Pham,
	\newblock \emph{Continuous-time stochastic control and optimization with financial applications},
	\newblock Springer, Berlin, 2009.
	
	\bibitem{soner2012}
	\newblock H. M. Soner, N. Touzi, J. Zhang,
	\newblock \emph{Wellposedness of second order backward {SDE}s},
	\newblock Probab. Theory Related Fields, \textbf{153} (2012), 149--190.
	
	\bibitem{soner2013}
	\newblock H. M. Soner, N. Touzi, J. Zhang,
	\newblock \emph{Dual formulation of second order target problems},
	\newblock Ann. Appl. Probab., \textbf{23} (2013), 308--347.
	
	\bibitem{zhang2017}
	\newblock J. Zhang,
	\newblock \emph{Backward stochastic differential equations},
	\newblock Springer, New York, 2017.
	

\end{thebibliography}
\end{document}